\documentclass[leqno, 12pt]{article}

\usepackage{amsthm}
\usepackage{amssymb}
\usepackage{amsmath}
\usepackage[english]{babel}
\usepackage[utf8]{inputenc}
\usepackage[T1]{fontenc}
\usepackage[normalem]{ulem}
\usepackage{xhfill}
\usepackage{graphicx}
\usepackage{fullpage}
\usepackage{enumitem}
\usepackage{calc}
\usepackage{caption}
\usepackage{subcaption}
\usepackage{hyperref}
\usepackage{times}
\usepackage{float}
\usepackage{marvosym}
\usepackage{hyperref}
\usepackage{tikz}
\usetikzlibrary{shapes,fit,positioning,calc,matrix,arrows}
\tikzset{
  optree/.style={scale=.5,thick,grow'=up,level distance=10mm,inner sep=1pt},
  comp/.style={draw=none,circle,fill,line width=0,inner sep=0pt},
  dot/.style={draw,circle,fill,inner sep=0pt,minimum width=3pt},
  circ/.style={draw,circle,inner sep=1pt,minimum width=4mm},
  emptycirc/.style={draw,circle,inner sep=1pt,minimum width=2mm},  
  root/.style={level distance=10mm,inner sep=1pt},
  leaf/.style={draw=none,circle,fill,line width=0,inner sep=0pt},
  nodot/.style={draw,circle,inner sep=1pt},
}
\usepackage{tikz-cd}

\newtheorem*{Conjecture*}{Conjecture}
\newtheorem*{Theorem*}{Theorem}
\newtheorem{Theorem}{Theorem}[section]

\newtheorem{Proposition}{Proposition}[section]
\newtheorem{Lemma}{Lemma}[section]

\theoremstyle{definition}
\newtheorem{Definition}[Proposition]{Definition}

\theoremstyle{remark}
\newtheorem{Remark}{Remark}[section]
\newtheorem{Example}[Remark]{Example}

\DeclareFontFamily{U}{wncy}{}
\DeclareFontShape{U}{wncy}{m}{n}{<->wncyr10}{}
\DeclareSymbolFont{mcy}{U}{wncy}{m}{n}
\DeclareMathSymbol{\Sh}{\mathord}{mcy}{"58}

\newcommand{\id}{\textnormal{id}}

\makeatletter
\newcommand{\oset}[2]{{\mathpalette\o@set{{#1}{#2}}}}
\newcommand{\o@set}[2]{\o@@set{#1}#2}
\newcommand{\o@@set}[3]{%
  \vbox{\offinterlineskip
    \ialign{\hfil##\hfil\cr
      $\m@th\o@set@demote{#1}#2$\cr
      \noalign{\vskip0.2pt}
      $\m@th#1#3$\cr
    }%
  }%
}
\newcommand{\o@set@demote}[1]{%
  \ifx#1\displaystyle\scriptstyle\else
  \ifx#1\textstyle\scriptstyle\else
  \scriptscriptstyle\fi\fi
}
\makeatother

\newlength{\depthofsumsign}
\newlength\myheight

\tikzset{cross/.style={cross out, draw=black, minimum size=2*(#1-\pgflinewidth), inner sep=0pt, outer sep=0pt},
cross/.default={1pt}}

\begin{document}

\title{Braces on the cohomology of noncrossing 2-partitions}
\author{\textsc{Paul Laubie}}
\date{}
\maketitle

\begin{abstract}
	We show that the operadic structure on the cohomology of the poset of noncrossing 2-partitions is isomorphic to the Brace operad.
\end{abstract}

\section*{Introduction}

Partitions are one of the most useful and most studied combinatorial objects. 
They admit a natural partial order, and thus, one may compute the homology (or cohomology) of this poset.
It was shown by Joyal that this cohomology has a natural Lie algebra structure; it is, in fact, the Lie operad, a universal object encoding Lie algebras.
A natural follow-up question is whether this result generalizes to other similar objects.
In a recent article, Delcroix-Oger and Dupont define the notion of operadic partition posets, generalising the poset of partitions, and show that each of those admits a natural structure of operad on their cohomology.\\
In this article, we compute the actual structure on the poset of noncrossing 2-partitions.
The noncrossing 2-partitions are particularly interesting combinatorial objects since they are known to be equivalent to parking functions.
In this article, we show that the algebraic structure on their cohomology is a structure of Brace algebra (actually the Brace operad).
The notion of Brace algebra was first introduced by Gerstenhaber and Voronov as a structure induced by the partial composition of multilinear functions; moreover, any Brace algebra is in particular a Lie algebra.
It was later shown by Chapoton that free Brace algebras admit a combinatorial description with rooted planar trees.\\
An interesting consequence of getting the Brace operad is that the cohomology of the augmented poset of noncrossing 2-partitions has a natural Brace algebra structure.\\ 

The current article is organised the following way:
\begin{itemize}
	\item First, we give some recollection on the theory of species of Joyal and on the theory of operads.
		We study the specific species of planar rooted trees and the Brace operad.
	\item We then introduce noncrossing 2-partitions, their cohomology, and an explicit isomorphism between their cohomology and the rooted planar trees.
	\item Finally, we show that, once endowed with its operadic structure, the cohomology of the poset of noncrossing 2-partitions is isomorphic to the Brace operad.
\end{itemize}

\tableofcontents

\section{Preliminaries}

\subsection{Species}
We freely use the language of Species throughout the article.
We refer the reader to \cite{MonFSpe,TLSpe,CombSpe,AlgOp} for an introduction to the theory of species and of algebraic operads.
Let us recall the basic definitions of the theory of species and of operads:

Let $\mathbb{B}$ the groupoid of finite sets with bijections and $\mathrm{Set}$ the category of sets.
A \emph{combinatorial species} $\mathrm{S}$ is a functor $\mathbb{B}\to\mathrm{Set}$; equivalently, this is a sequence of sets $(\mathrm{S}(n))_{n\in\mathbb{N}}$ endowed with an action of $\mathfrak{S}_n$ on $\mathrm{S}(n)$.
Similarly, let $\mathrm{Vect}$ the category of vector spaces, a \emph{linear species} $\mathcal{S}$ is a functor $\mathbb{B}\to\mathrm{Vect}$ which is equivalently a sequence of representations of $\mathfrak{S}_n$.
Let $\mathrm{gr}\mathrm{Vect}$ the category of graded vector spaces, a \emph{graded linear species} is a functor $\mathbb{B}\to\mathrm{gr}\mathrm{Vect}$.
Combinatorial and linear species behave like a ``categorification'' of exponential generating series. 
The exponential generating series associated with a combinatorial species $\mathrm{S}$ is:
$$f_\mathrm{S}=\sum \frac{|\mathrm{S}(n)|}{n!}t^n$$
with notation $\mathrm{S}(n)=\mathrm{S}(\{1,\dots,n\})$. 
Thus, the exponential generation series is well-defined only if $\mathrm{S}(n)$ is finite for each $n\in\mathbb{N}$, or equivalently only if $\mathrm{S}(N)$ is finite for each finite set $N$.
The exponential generating series associated with a linear species $\mathcal{S}$ is:
$$f_\mathcal{S}=\sum \frac{\dim(\mathcal{S}(n))}{n!}t^n$$
Thus, the exponential generation series is well-defined only if $\mathcal{S}(n)$ is finite dimensional for each $n\in\mathbb{N}$, or equivalently only if $\mathcal{S}(N)$ is finite dimensional for each finite set $N$.

Species admit a sum, a product, and a composition (the plethysm) defined the same as the sum, product, and composition of formal power series.
Let $\mathrm{S},\mathrm{R}$ two combinatorial species:
$$\left(\mathrm{S}+\mathrm{R} \right)(N)=\mathrm{S}(N)\uplus\mathrm{R}(N), \qquad \left(\mathrm{S}\times\mathrm{R} \right)(N)=\biguplus_{I\sqcup J=N}\mathrm{S}(I)\times\mathrm{R}(J)$$
$$\left(\mathrm{S}\circ\mathrm{R} \right)(N)=\biguplus_{P\vdash N}\mathrm{S}(P)\times\prod_{q\in P}\mathrm{R}(q) \quad \text{ for } \mathrm{R}(\emptyset)=\emptyset$$
Here, we used the notation $\uplus$ for the coproduct in the category of sets, which is the ``forced disjoint'' union, meaning that even if $A$ and $B$ have common elements, the subsets $A\hookrightarrow A\uplus B$ and $B\hookrightarrow A\uplus B$ are disjoint.
We used the notation $\sqcup$ for the usual union, when it is actually disjoint.
Finally, we used the notation $P\vdash N$ for $P$ a partition of $N$.

Those operations correspond to the sum, the product, and the composition of formal power series, and they satisfy the same compatibility relations; they are associative, the sum distributes over the product, etc ...
Let us point out that the last line is a categorical version of the Faà di Bruno formula of the coefficients of the composition of two formal power series.

We have the same formulæ for linear species by replacing $\uplus$ by $\oplus$ and $\times$ by $\otimes$.
Let $\mathcal{S},\mathcal{R}$ two combinatorial species:
$$\left(\mathcal{S}\oplus\mathcal{R} \right)(N)=\mathcal{S}(N)\oplus\mathcal{R}(N), \qquad \left(\mathcal{S}\otimes\mathcal{R} \right)(N)=\bigoplus_{I\sqcup J=N}\mathcal{S}(I)\otimes\mathcal{R}(J)$$
$$\left(\mathcal{S}\circ\mathcal{R} \right)(N)=\bigoplus_{P\vdash N}\mathcal{S}(P)\otimes\bigotimes_{q\in P}\mathcal{R}(q) \quad \text{ for } \mathcal{R}(\emptyset)=0$$

A \emph{set operad} (resp. \emph{algebraic operad}) is a monoidal object with respect to the plethysm in the category of combinatorial species (resp. linear species). 
Thus, an operad $(\mathcal{P},\gamma,\mathrm{e})$ is the data of a species $\mathcal{P}$, a \emph{composition map} $\gamma:\mathcal{P}\circ\mathcal{P}\to \mathcal{P}$, and a unit $\mathrm{e}\in \mathcal{P}(1)$ satisfying the usual associativity, and unitality relations.
We recall that the data of a composition map $\gamma$ is equivalent to the data of \emph{partial compositions} $\circ_i$.
Let us use the phrase ``$j$ is an input of $f$'' to mean that $f\in\mathcal{P}(J)$ with $j\in J$.
The \emph{partial compositions} are equivariant maps $\circ_i:(f,g)\mapsto f\circ_i g$ for $i$ an input of $f$, such that:
$$\begin{array}{c|cc}
	(f\circ_i g)\circ_j h = & (f\circ_j h)\circ_i g & \text{ if $j$ is an input of $f$}\\
	& f\circ_i(g\circ_j h) & \text{ if $j$ is an input of $g$}
\end{array}$$
And $\mathrm{e}\circ_1 f=f\circ_i \mathrm{e}=f$.

\subsection{Rooted planar trees}

Let us introduce one of the two main combinatorial objects of this article.

\begin{Definition}
	A (finite) \emph{rooted planar tree} is inductively defined as being the data of a vertex called the \emph{root} and a totally ordered set of rooted planar trees called the \emph{children} of this vertex.
	The set of vertices $v(\tau)$ of a planar rooted tree $\tau=(r;\tau_1,\dots,\tau_n)$ is $v(\tau)=\{r\}\sqcup\bigsqcup v(\tau_i)$.
	A vertex is a \emph{leaf} if its set of children is empty.
	The \emph{height} of the rooted planar tree $\tau$ is $h(\tau)=1+\max(h(\tau_i))$, with the height of the single vertex tree being $0$.
	This is exactly the length of the longest path from the root to a leaf.\\
	The species of rooted planar trees denoted $\mathrm{PT}$ is the species such that $\mathrm{PT}(N)$ is the set of planar rooted trees $\tau$ with $v(\tau)=N$, and the bijections act by relabelling the vertices.\\
	Let $\mathrm{C}$ the combinatorial species of \emph{rooted planar corollas}, which are the rooted planar trees of height exactly $1$.
\end{Definition}

\begin{Example}
	Let us explicitely describe $\mathrm{PT}(3)$:
	$$\left\{\,
		\vcenter{\hbox{\begin{tikzpicture}[scale=0.4]
			\draw[thick] (0, 0)--(0, 2);
			\draw[thick] (0, 2)--(0, 4);
			\draw[fill=white, thick] (0, 0) circle [radius=15pt];
			\draw[fill=white, thick] (0, 2) circle [radius=15pt];
			\draw[fill=white, thick] (0, 4) circle [radius=15pt];
			\node at (0, 0) {$1$};
			\node at (0, 2) {$2$};
			\node at (0, 4) {$3$};
		\end{tikzpicture}}}\;,
		\vcenter{\hbox{\begin{tikzpicture}[scale=0.4]
			\draw[thick] (0, 0)--(0, 2);
			\draw[thick] (0, 2)--(0, 4);
			\draw[fill=white, thick] (0, 0) circle [radius=15pt];
			\draw[fill=white, thick] (0, 2) circle [radius=15pt];
			\draw[fill=white, thick] (0, 4) circle [radius=15pt];
			\node at (0, 0) {$1$};
			\node at (0, 2) {$3$};
			\node at (0, 4) {$2$};
		\end{tikzpicture}}}\;,
		\vcenter{\hbox{\begin{tikzpicture}[scale=0.4]
			\draw[thick] (0, 0)--(0, 2);
			\draw[thick] (0, 2)--(0, 4);
			\draw[fill=white, thick] (0, 0) circle [radius=15pt];
			\draw[fill=white, thick] (0, 2) circle [radius=15pt];
			\draw[fill=white, thick] (0, 4) circle [radius=15pt];
			\node at (0, 0) {$2$};
			\node at (0, 2) {$1$};
			\node at (0, 4) {$3$};
		\end{tikzpicture}}}\;,
		\vcenter{\hbox{\begin{tikzpicture}[scale=0.4]
			\draw[thick] (0, 0)--(0, 2);
			\draw[thick] (0, 2)--(0, 4);
			\draw[fill=white, thick] (0, 0) circle [radius=15pt];
			\draw[fill=white, thick] (0, 2) circle [radius=15pt];
			\draw[fill=white, thick] (0, 4) circle [radius=15pt];
			\node at (0, 0) {$2$};
			\node at (0, 2) {$3$};
			\node at (0, 4) {$1$};
		\end{tikzpicture}}}\;,
		\vcenter{\hbox{\begin{tikzpicture}[scale=0.4]
			\draw[thick] (0, 0)--(0, 2);
			\draw[thick] (0, 2)--(0, 4);
			\draw[fill=white, thick] (0, 0) circle [radius=15pt];
			\draw[fill=white, thick] (0, 2) circle [radius=15pt];
			\draw[fill=white, thick] (0, 4) circle [radius=15pt];
			\node at (0, 0) {$3$};
			\node at (0, 2) {$1$};
			\node at (0, 4) {$2$};
		\end{tikzpicture}}}\;,
		\vcenter{\hbox{\begin{tikzpicture}[scale=0.4]
			\draw[thick] (0, 0)--(0, 2);
			\draw[thick] (0, 2)--(0, 4);
			\draw[fill=white, thick] (0, 0) circle [radius=15pt];
			\draw[fill=white, thick] (0, 2) circle [radius=15pt];
			\draw[fill=white, thick] (0, 4) circle [radius=15pt];
			\node at (0, 0) {$3$};
			\node at (0, 2) {$2$};
			\node at (0, 4) {$1$};
		\end{tikzpicture}}}\;,
		\vcenter{\hbox{\begin{tikzpicture}[scale=0.4]
			\draw[thick] (0, 0.5)--(-1, 2);
			\draw[thick] (0, 0.5)--(1, 2);
			\draw[fill=white, thick] (0, 0.5) circle [radius=15pt];
			\draw[fill=white, thick] (-1, 2) circle [radius=15pt];
			\draw[fill=white, thick] (1, 2) circle [radius=15pt];
			\node at (0, 0.5) {$1$};
			\node at (-1, 2) {$2$};
			\node at (1, 2) {$3$};
		\end{tikzpicture}}},
		\vcenter{\hbox{\begin{tikzpicture}[scale=0.4]
			\draw[thick] (0, 0.5)--(-1, 2);
			\draw[thick] (0, 0.5)--(1, 2);
			\draw[fill=white, thick] (0, 0.5) circle [radius=15pt];
			\draw[fill=white, thick] (-1, 2) circle [radius=15pt];
			\draw[fill=white, thick] (1, 2) circle [radius=15pt];
			\node at (0, 0.5) {$1$};
			\node at (-1, 2) {$3$};
			\node at (1, 2) {$2$};
		\end{tikzpicture}}},
		\vcenter{\hbox{\begin{tikzpicture}[scale=0.4]
			\draw[thick] (0, 0.5)--(-1, 2);
			\draw[thick] (0, 0.5)--(1, 2);
			\draw[fill=white, thick] (0, 0.5) circle [radius=15pt];
			\draw[fill=white, thick] (-1, 2) circle [radius=15pt];
			\draw[fill=white, thick] (1, 2) circle [radius=15pt];
			\node at (0, 0.5) {$2$};
			\node at (-1, 2) {$1$};
			\node at (1, 2) {$3$};
		\end{tikzpicture}}},
		\vcenter{\hbox{\begin{tikzpicture}[scale=0.4]
			\draw[thick] (0, 0.5)--(-1, 2);
			\draw[thick] (0, 0.5)--(1, 2);
			\draw[fill=white, thick] (0, 0.5) circle [radius=15pt];
			\draw[fill=white, thick] (-1, 2) circle [radius=15pt];
			\draw[fill=white, thick] (1, 2) circle [radius=15pt];
			\node at (0, 0.5) {$2$};
			\node at (-1, 2) {$3$};
			\node at (1, 2) {$1$};
		\end{tikzpicture}}},
		\vcenter{\hbox{\begin{tikzpicture}[scale=0.4]
			\draw[thick] (0, 0.5)--(-1, 2);
			\draw[thick] (0, 0.5)--(1, 2);
			\draw[fill=white, thick] (0, 0.5) circle [radius=15pt];
			\draw[fill=white, thick] (-1, 2) circle [radius=15pt];
			\draw[fill=white, thick] (1, 2) circle [radius=15pt];
			\node at (0, 0.5) {$3$};
			\node at (-1, 2) {$1$};
			\node at (1, 2) {$2$};
		\end{tikzpicture}}},
		\vcenter{\hbox{\begin{tikzpicture}[scale=0.4]
			\draw[thick] (0, 0.5)--(-1, 2);
			\draw[thick] (0, 0.5)--(1, 2);
			\draw[fill=white, thick] (0, 0.5) circle [radius=15pt];
			\draw[fill=white, thick] (-1, 2) circle [radius=15pt];
			\draw[fill=white, thick] (1, 2) circle [radius=15pt];
			\node at (0, 0.5) {$3$};
			\node at (-1, 2) {$2$};
			\node at (1, 2) {$1$};
		\end{tikzpicture}}}
	\right\}$$
	For corollas, we have that: 
	\begin{multline*}
		\mathrm{C}(0)=\emptyset, \mathrm{C}(1)=\emptyset, \mathrm{C}(2)=\left\{
			\vcenter{\hbox{\begin{tikzpicture}[scale=0.4]
				\draw[thick] (0, 0)--(0, 2);
				\draw[fill=white, thick] (0, 0) circle [radius=15pt];
				\draw[fill=white, thick] (0, 2) circle [radius=15pt];
				\node at (0, 0) {$1$};
				\node at (0, 2) {$2$};
			\end{tikzpicture}}},
			\vcenter{\hbox{\begin{tikzpicture}[scale=0.4]
				\draw[thick] (0, 0)--(0, 2);
				\draw[fill=white, thick] (0, 0) circle [radius=15pt];
				\draw[fill=white, thick] (0, 2) circle [radius=15pt];
				\node at (0, 0) {$2$};
				\node at (0, 2) {$1$};
			\end{tikzpicture}}}
		\right\},\\
		\mathrm{C}(3)=\left\{ 
			\vcenter{\hbox{\begin{tikzpicture}[scale=0.4]
				\draw[thick] (0, 0.5)--(-1, 2);
				\draw[thick] (0, 0.5)--(1, 2);
				\draw[fill=white, thick] (0, 0.5) circle [radius=15pt];
				\draw[fill=white, thick] (-1, 2) circle [radius=15pt];
				\draw[fill=white, thick] (1, 2) circle [radius=15pt];
				\node at (0, 0.5) {$1$};
				\node at (-1, 2) {$2$};
				\node at (1, 2) {$3$};
			\end{tikzpicture}}},
			\vcenter{\hbox{\begin{tikzpicture}[scale=0.4]
				\draw[thick] (0, 0.5)--(-1, 2);
				\draw[thick] (0, 0.5)--(1, 2);
				\draw[fill=white, thick] (0, 0.5) circle [radius=15pt];
				\draw[fill=white, thick] (-1, 2) circle [radius=15pt];
				\draw[fill=white, thick] (1, 2) circle [radius=15pt];
				\node at (0, 0.5) {$1$};
				\node at (-1, 2) {$3$};
				\node at (1, 2) {$2$};
			\end{tikzpicture}}},
			\vcenter{\hbox{\begin{tikzpicture}[scale=0.4]
				\draw[thick] (0, 0.5)--(-1, 2);
				\draw[thick] (0, 0.5)--(1, 2);
				\draw[fill=white, thick] (0, 0.5) circle [radius=15pt];
				\draw[fill=white, thick] (-1, 2) circle [radius=15pt];
				\draw[fill=white, thick] (1, 2) circle [radius=15pt];
				\node at (0, 0.5) {$2$};
				\node at (-1, 2) {$1$};
				\node at (1, 2) {$3$};
			\end{tikzpicture}}},
			\vcenter{\hbox{\begin{tikzpicture}[scale=0.4]
				\draw[thick] (0, 0.5)--(-1, 2);
				\draw[thick] (0, 0.5)--(1, 2);
				\draw[fill=white, thick] (0, 0.5) circle [radius=15pt];
				\draw[fill=white, thick] (-1, 2) circle [radius=15pt];
				\draw[fill=white, thick] (1, 2) circle [radius=15pt];
				\node at (0, 0.5) {$2$};
				\node at (-1, 2) {$3$};
				\node at (1, 2) {$1$};
			\end{tikzpicture}}},
			\vcenter{\hbox{\begin{tikzpicture}[scale=0.4]
				\draw[thick] (0, 0.5)--(-1, 2);
				\draw[thick] (0, 0.5)--(1, 2);
				\draw[fill=white, thick] (0, 0.5) circle [radius=15pt];
				\draw[fill=white, thick] (-1, 2) circle [radius=15pt];
				\draw[fill=white, thick] (1, 2) circle [radius=15pt];
				\node at (0, 0.5) {$3$};
				\node at (-1, 2) {$1$};
				\node at (1, 2) {$2$};
			\end{tikzpicture}}},
			\vcenter{\hbox{\begin{tikzpicture}[scale=0.4]
				\draw[thick] (0, 0.5)--(-1, 2);
				\draw[thick] (0, 0.5)--(1, 2);
				\draw[fill=white, thick] (0, 0.5) circle [radius=15pt];
				\draw[fill=white, thick] (-1, 2) circle [radius=15pt];
				\draw[fill=white, thick] (1, 2) circle [radius=15pt];
				\node at (0, 0.5) {$3$};
				\node at (-1, 2) {$2$};
				\node at (1, 2) {$1$};
			\end{tikzpicture}}}
		\right\}, \text{ etc...}
	\end{multline*}
\end{Example}

We may see that for $n\geq 2$, we have that $\mathrm{C}$ is isomorphic to $\mathfrak{S}_n$ endowed with its left action.

\begin{Definition}
	Let $\tau$ a rooted planar tree. 
	A vertex $a$ is \emph{above} another vertex $b$ if $b$ belongs to one of the children of $a$.
	A vertex $a$ is \emph{at the left of} another vertex $b$ if there is a vertex $v$ such that $a$ belongs to a child $\tau_a$ of $v$, $b$ belongs to a child $\tau_b$ of $v$ and $\tau_a<\tau_b$ (we recall that the children of a vertex are totally ordered).    
\end{Definition}

\begin{Example}
	In the following example, $\alpha$ is above $\delta$, and $\alpha$ is at the left of $\beta$.
	\[\vcenter{\hbox{\begin{tikzpicture}[scale=0.4]
		\draw[thick] (0, 0.5)--(-1, 2);
		\draw[thick] (0, 0.5)--(1, 2);
		\draw[thick] (-1, 3.5)--(-1, 2);
		\draw[fill=white, thick] (0, 0.5) circle [radius=15pt];
		\draw[fill=white, thick] (-1, 2) circle [radius=15pt];
		\draw[fill=white, thick] (-1, 3.5) circle [radius=15pt];
		\draw[fill=white, thick] (1, 2) circle [radius=15pt];
		\node at (0, 0.5) {$\delta$};
		\node at (-1, 2) {$\gamma$};
		\node at (-1, 3.5) {$\alpha$};
		\node at (1, 2) {$\beta$};
	\end{tikzpicture}}}\]
\end{Example}

\begin{Definition}\label{def:dro}
	Let $\tau$ a rooted planar tree.
	The \emph{depth-right ordering} of the vertex of $\tau$ is obtained via the following algorithm:
	\begin{itemize}
		\item We start at the root, and we iteratively do the following steps until we have ordered all the vertices:
		\item If we are not at a leaf, we move to the root of the maximal (rightmost) child of the current vertex.
		\item If we are at a leaf, we remove it and move to its parent.
	\end{itemize}
	The order is the one given by the order of the first passage.
\end{Definition}

\begin{Example}\label{expl:rpt2tnc}
	In the rooted planar tree of the last example, the order is the following; it will be useful in the sequel to put the vertex on an axis according to the depth-right ordering:
	\[\vcenter{\hbox{\begin{tikzpicture}[scale=0.4]
		\draw[thick] (0, 0.5)--(-1, 2);
		\draw[thick] (0, 0.5)--(1, 2);
		\draw[thick] (-1, 3.5)--(-1, 2);
		\draw[fill=white, thick] (0, 0.5) circle [radius=15pt];
		\draw[fill=white, thick] (-1, 2) circle [radius=15pt];
		\draw[fill=white, thick] (-1, 3.5) circle [radius=15pt];
		\draw[fill=white, thick] (1, 2) circle [radius=15pt];
		\node at (0, 0.5) {$1$};
		\node at (-1, 2) {$3$};
		\node at (-1, 3.5) {$4$};
		\node at (1, 2) {$2$};
	\end{tikzpicture}}}\qquad\mapsto\qquad
	\vcenter{\hbox{\begin{tikzpicture}[scale=0.4]
		\draw[thick] (0, 0)  (4, 0) arc(0:180:2) circle;
                \draw[thick] (4, 0)  (6, 0) arc(0:180:1) circle;
                \draw[thick] (0, 0)  (2, 0) arc(0:180:1) circle;
		\draw[fill=white, thick] (0, 0) circle [radius=15pt];
		\draw[fill=white, thick] (2, 0) circle [radius=15pt];
		\draw[fill=white, thick] (4, 0) circle [radius=15pt];
		\draw[fill=white, thick] (6, 0) circle [radius=15pt];
		\node at (0, 0) {$1$};
		\node at (2, 0) {$2$};
		\node at (4, 0) {$3$};
		\node at (6, 0) {$4$};
	\end{tikzpicture}}}
	\]
	In particular, with this geometric representation, we may notice that edges will never cross. 
\end{Example}

\begin{Definition}\label{def:l2ro}
	The \emph{left-to-right} ordering of the edges of a rooted planar tree is given by labelling each edge by its maximal vertex according to the depth-right ordering, and reversing the order. 
\end{Definition}

\begin{Example}
	Let us compute the left-to-right ordering for the following rooted planar tree:
	\[\vcenter{\hbox{\begin{tikzpicture}[scale=0.4]
		\draw[thick] (0, 0.5)--(-1, 2);
		\draw[thick] (0, 0.5)--(1, 2);
		\draw[thick] (-1, 3.5)--(-1, 2);
		\draw[fill=white, thick] (0, 0.5) circle [radius=15pt];
		\draw[fill=white, thick] (-1, 2) circle [radius=15pt];
		\draw[fill=white, thick] (-1, 3.5) circle [radius=15pt];
		\draw[fill=white, thick] (1, 2) circle [radius=15pt];
		\node at (0, 0.5) {$\delta$};
		\node at (-1, 2) {$\gamma$};
		\node at (-1, 3.5) {$\alpha$};
		\node at (1, 2) {$\beta$};
	\end{tikzpicture}}}\rightsquigarrow(\gamma,\alpha)<(\delta,\gamma)<(\delta,\beta)\]
\end{Example}

\subsection{The Brace operad}

The Brace operad was first defined in \cite{GV95}, let us recall its definition and some of its basic properties.
Let $N=\{x_0,\dots,x_n\}$ with $n+1$ elements, we will use the following notation to write the operation encoded by a corolla:
\[x_0\{x_1,\dots,x_n\}:=
	\vcenter{\hbox{\begin{tikzpicture}[scale=0.6]
		\draw[thick] (0, 0.5)--(-1.5, 2);
		\draw[thick] (0, 0.5)--(1.5, 2);
		\draw[fill=white, thick] (0, 0.5) circle [radius=15pt];
		\draw[fill=white, thick] (-1.5, 2) circle [radius=15pt];
		\draw[fill=white, thick] (1.5, 2) circle [radius=15pt];
		\node at (0, 0.5) {$x_0$};
		\node at (-1.5, 2) {$x_1$};
		\node at (1.5, 2) {$x_n$};
		\node at (0, 2) {$...$};
	\end{tikzpicture}}}
\]

\begin{Definition}
	Let $\mathcal{C}$ the linear species spanned by $\mathrm{C}$, and $\mathcal{T}(\mathcal{C})$ the free operad generated by $\mathcal{C}$.
	Let $\mathcal{R}$ the following set of relation:
	\begin{multline*}
		x_0\{x_1,\dots,x_m\}\{y_1,\dots,y_m\}=
		\sum_{0\leq i_1\leq j_1\leq\dots\leq i_n\leq j_n\leq m} x_0\{y_1,\dots,y_{i_1},x_1\{y_{i_1+1},\dots,y_{j_1}\},y_{j_1+1},\dots\\
		\dots,y_{i_n},x_n\{y_{i_n+1},\dots,y_{j_n}\},y_{j_n+1},\dots,y_m\}
	\end{multline*}
	Where we use the natural notation for the partial compositions of corollas with the convention $x\{\}=x$.
	The operad $\mathrm{Brace}$ is defined by generators and relations as:
	$$\mathrm{Brace}=\mathcal{T}(\mathcal{C})/\mathcal{R}.$$
\end{Definition}

\begin{Remark}
	The combinatorial interpretation of those relations is the following:
	\[
		\vcenter{\hbox{\begin{tikzpicture}[scale=0.6]
			\draw[thick] (0, 0.5)--(-1.5, 2);
			\draw[thick] (0, 0.5)--(1.5, 2);
			\draw[fill=white, thick] (0, 0.5) circle [radius=15pt];
			\draw[fill=white, thick] (-1.5, 2) circle [radius=15pt];
			\draw[fill=white, thick] (1.5, 2) circle [radius=15pt];
			\node at (0, 0.5) {$*$};
			\node at (-1.5, 2) {$y_1$};
			\node at (1.5, 2) {$y_m$};
			\node at (0, 2) {$...$};
		\end{tikzpicture}}}\circ_*
		\vcenter{\hbox{\begin{tikzpicture}[scale=0.6]
			\draw[thick] (0, 0.5)--(-1.5, 2);
			\draw[thick] (0, 0.5)--(1.5, 2);
			\draw[fill=white, thick] (0, 0.5) circle [radius=15pt];
			\draw[fill=white, thick] (-1.5, 2) circle [radius=15pt];
			\draw[fill=white, thick] (1.5, 2) circle [radius=15pt];
			\node at (0, 0.5) {$x_0$};
			\node at (-1.5, 2) {$x_1$};
			\node at (1.5, 2) {$x_n$};
			\node at (0, 2) {$...$};
		\end{tikzpicture}}}=\sum
		\vcenter{\hbox{\begin{tikzpicture}[scale=0.6]
			\draw[thick] (0, 0)--(-6, 2);
			\draw[thick] (0, 0)--(-3, 2);
			\draw[thick] (0, 0)--(-1.5, 2);
			\draw[thick] (-1.5, 2)--(-3.75, 4);
			\draw[thick] (-1.5, 2)--(-.75, 4);
			\draw[thick] (0, 0)--(1.5, 2);
			\draw[thick] (1.5, 2)--(.75, 4);
			\draw[thick] (1.5, 2)--(3.75, 4);
			\draw[thick] (0, 0)--(3, 2);
			\draw[thick] (0, 0)--(6, 2);
			\draw[fill=white, thick] (0, 0) circle [radius=15pt];
			\draw[fill=white, thick] (-6, 2) circle [radius=15pt];
			\draw[fill=white, thick] (-3, 2) circle [radius=15pt];
			\draw[fill=white, thick] (-1.5, 2) circle [radius=15pt];
			\draw[fill=white, thick] (-3.75, 4) circle [radius=19pt];
			\draw[fill=white, thick] (-.75, 4) circle [radius=15pt];
			\draw[fill=white, thick] (1.5, 2) circle [radius=15pt];
			\draw[fill=white, thick] (.75, 4) circle [radius=19pt];
			\draw[fill=white, thick] (3.75, 4) circle [radius=15pt];
			\draw[fill=white, thick] (3, 2) circle [radius=19pt];
			\draw[fill=white, thick] (6, 2) circle [radius=15pt];
			\node at (0, 0) {$x_0$};
			\node at (-6, 2) {$y_1$};
			\node at (-4.5, 2) {$...$};
			\node at (-3, 2) {$y_{i_1}$};
			\node at (-1.5, 2) {$x_1$};
			\node at (-3.75, 4) {\scalebox{0.8}{$y_{i_1+1}$}};
			\node at (-2.25, 4) {$...$};
			\node at (-0.75, 4) {$y_{j_1}$};
			\node at (0, 2) {$...$};
			\node at (1.5, 2) {$x_n$};
			\node at (0.75, 4) {\scalebox{0.8}{$y_{i_n+1}$}};
			\node at (2.25, 4) {$...$};
			\node at (3.75, 4) {$y_{j_n}$};
			\node at (3, 2) {\scalebox{0.8}{$y_{j_n+1}$}};
			\node at (4.5, 2) {$...$};
			\node at (6, 2) {$y_{m}$};
		\end{tikzpicture}}}
	\]
\end{Remark}

The combinatorial interpretation of this operad was given by Chapoton \cite{C02}.
We will not use the full combinatorial interpretation; however, we will use a combinatorial characterisation of $\mathrm{Brace}$.
First, let us point out that, since rooted planar corollas are rooted planar trees, we have an inclusion $\mathrm{C}\subseteq\mathrm{PT}$.

\begin{Theorem}[{Corollary of \cite[Section 2.2]{C02}}]\label{thm:carac}
	There is a unique structure of operad on $\mathcal{PT}$ such that:
	\begin{enumerate}
		\item the natural inclusion of $\mathrm{C}\subseteq\mathrm{PT}$ induces an isomorphism of operad with $\mathrm{Brace}$; and
		\item the partial composition of the rooted planar tree $\sigma$ in a leaf $i$ of the rooted planar tree $\tau$ is the rooted planar tree obtained by replacing $i$ by $\sigma$ in $\tau$.
	\end{enumerate}
\end{Theorem}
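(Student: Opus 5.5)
We need to establish existence and uniqueness of an operad structure on $\mathcal{PT}$ satisfying conditions 1 and 2. The plan is to transport Chapoton's description of $\mathrm{Brace}$ along a species isomorphism, and then to show that the two conditions pin the structure down.

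\emph{Existence.} By \cite[Section 2.2]{C02}, $\mathrm{Brace}(N)$ has a basis indexed by $\mathrm{PT}(N)$. Concretely, a rooted planar tree $\tau$ corresponds to the element of $\mathrm{Brace}$ obtained by composing the corollas of $\tau$ at its leaves, bottom-up. Write $\tau$ as the corolla $r\{\tau_1,\dots,\tau_k\}$, where $\tau_1,\dots,\tau_k$ are its children.
\begin{itemize}
\item The recursive definition replaces each input $\mathrm{root}(\tau_j)$ of the corolla $r\{\mathrm{root}(\tau_1),\dots,\mathrm{root}(\tau_k)\}$ by the element associated with $\tau_j$.
\item Since these are compositions into leaves of the corolla, no brace relation is triggered. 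Chapoton's theorem says exactly that these elements form a basis.
\end{itemize}
Transporting the operad structure of $\mathrm{Brace}$ along this linear isomorphism $\mathcal{PT}\cong\mathrm{Brace}$ gives an operad structure on $\mathcal{PT}$. Equivariance holds because the isomorphism is defined by relabelling. Condition 1 holds by construction, since corollas are sent to the generators.

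For condition 2, take $\sigma$ and a leaf $i$ of $\tau$. The element for $\tau$ is an iterated composition of corollas in which $i$ appears only as a leaf input of some corolla $c$. Using the associativity axioms of the partial compositions, the composite $\tau\circ_i\sigma$ becomes the same iterated composition with $c$ replaced by $c\circ_i\sigma$. This is precisely the recursive description of the tree obtained by grafting $\sigma$ in place of $i$. One should check that the recursive description allows an arbitrary tree, not just a corolla, to sit at a leaf. This follows by induction on the height of $\tau$.

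\emph{Uniqueness.} Suppose two operad structures satisfy conditions 1 and 2. Condition 1 fixes the partial compositions between corollas up to the identification. Condition 2 then forces every tree to be the iterated leaf-grafting of its corollas, so the identification $\mathcal{PT}\to\mathrm{Brace}$ must coincide with the one above, by induction on the number of internal vertices. Since both structures are obtained by transport along the same isomorphism, they agree.

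There is a subtlety: condition 1 alone only fixes the structure on the suboperad generated by the corollas. Knowing that this suboperad is all of $\mathcal{PT}$ and is isomorphic to $\mathrm{Brace}$ is exactly where Chapoton's basis theorem enters.

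\textbf{Main obstacle.} The main difficulty is matching conventions with \cite{C02}: checking that Chapoton's basis is indexed by planar rooted trees, with children ordered as in the brace notation $x_0\{x_1,\dots,x_n\}$, and that grafting at a leaf yields basis elements rather than sums. I would verify this first on $\mathrm{PT}(3)$, checking that the $12$ trees give $12$ independent elements, with dimension $12 = 3!\cdot\mathrm{Cat}_2$. I would then confirm that the brace relation only introduces sums when composing at a non-leaf vertex.
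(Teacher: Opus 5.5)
Your proposal is correct and matches the paper's argument. Existence is taken from Chapoton's construction; you spell out the transport of structure and check condition 2 via associativity, where the paper simply cites \cite[Section 2.1]{C02}. Uniqueness comes from the fact that every planar rooted tree is an iterated leaf-grafting of corollas, so the isomorphism with $\mathrm{Brace}$ that fixes the corollas is forced. The paper phrases this last step as the induced operad isomorphism $\varphi:(\mathcal{PT},\gamma)\to(\mathcal{PT},\gamma')$ with $\varphi_{|\mathcal{C}}=\id_{\mathcal{C}}$ being the identity, which is equivalent to your ``same transport'' formulation.
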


\begin{proof}
	The actual operad structure satisfying both properties is constructed in \cite[Section 2.1]{C02}.
	Let $\gamma,\gamma'$ two operadic structures on $\mathcal{PT}$ satisfying both properties.
	By Property 1, we have an isomorphism of operad $\varphi:(\mathcal{PT},\gamma)\to(\mathcal{PT},\gamma')$ such that $\varphi_{|\mathcal{C}}=\id_{\mathcal{C}}$.
	By Property 2, the partial composition induced by $\gamma$ and $\gamma'$ coincides when composing in a leaf.
	By definition of a rooted planar tree, any rooted planar tree is obtained by replacing the leaves of a corolla by rooted planar trees of lower height. 
	Thus, any planar rooted tree $\tau$ can be obtained by iteratively composing corollas in their leaves.
	Since $\varphi$ is a morphism of operad, $\varphi(\tau)=\tau$, and $\gamma=\gamma'$.
\end{proof}

\begin{Remark}
	The actual operadic structure on $\mathcal{PT}$ is very explicit and combinatorial.
	This is the planar version of the Chapoton-Livernet insertion of rooted (nonplanar) trees, see \cite{CL01}.
	However, we will not recall its precise definition in this article since the above characterisation is enough.
\end{Remark}	

The operad we will actually study is $\Lambda\mathrm{Brace}$ the operadic suspension of $\mathrm{Brace}$.
In $\Lambda\mathrm{Brace}$, the species $\mathcal{C}$ is shifted; indeed, the space $\mathcal{C}(n)$ is in degree $n-1$, and some Koszul signs appear in the generating relations of $\mathcal{R}$.
Hopefully, the combinatorial interpretation of $\mathrm{Brace}$ given by Chapoton allows us to compute this sign.
Indeed, we may notice that the operadic suspension corresponds to imposing that each edge of the rooted planar trees is of degree $1$.
Thus, to compute the Koszul signs, it suffices to fix an order of edges for each rooted planar tree, to compute the composition, and to compute the signature of the permutation between the order of edges appearing in the computation, and the fixed order.

\begin{Proposition}\label{prp:ks}
	By convention, let us order the edges of a partial composition by first ordering the edges of the left term, and then the right term. 
	With the left-to-right ordering of the edges, the defining relations of $\Lambda\mathrm{Brace}$ are:
	\[
		\vcenter{\hbox{\begin{tikzpicture}[scale=0.6]
			\draw[thick] (0, 0.5)--(-1.5, 2);
			\draw[thick] (0, 0.5)--(1.5, 2);
			\draw[fill=white, thick] (0, 0.5) circle [radius=15pt];
			\draw[fill=white, thick] (-1.5, 2) circle [radius=15pt];
			\draw[fill=white, thick] (1.5, 2) circle [radius=15pt];
			\node at (0, 0.5) {$*$};
			\node at (-1.5, 2) {$y_1$};
			\node at (1.5, 2) {$y_m$};
			\node at (0, 2) {$...$};
		\end{tikzpicture}}}\circ_*
		\vcenter{\hbox{\begin{tikzpicture}[scale=0.6]
			\draw[thick] (0, 0.5)--(-1.5, 2);
			\draw[thick] (0, 0.5)--(1.5, 2);
			\draw[fill=white, thick] (0, 0.5) circle [radius=15pt];
			\draw[fill=white, thick] (-1.5, 2) circle [radius=15pt];
			\draw[fill=white, thick] (1.5, 2) circle [radius=15pt];
			\node at (0, 0.5) {$x_0$};
			\node at (-1.5, 2) {$x_1$};
			\node at (1.5, 2) {$x_n$};
			\node at (0, 2) {$...$};
		\end{tikzpicture}}}=\sum (-1)^\varepsilon
		\vcenter{\hbox{\begin{tikzpicture}[scale=0.6]
			\draw[thick] (0, 0)--(-6, 2);
			\draw[thick] (0, 0)--(-3, 2);
			\draw[thick] (0, 0)--(-1.5, 2);
			\draw[thick] (-1.5, 2)--(-3.75, 4);
			\draw[thick] (-1.5, 2)--(-.75, 4);
			\draw[thick] (0, 0)--(1.5, 2);
			\draw[thick] (1.5, 2)--(.75, 4);
			\draw[thick] (1.5, 2)--(3.75, 4);
			\draw[thick] (0, 0)--(3, 2);
			\draw[thick] (0, 0)--(6, 2);
			\draw[fill=white, thick] (0, 0) circle [radius=15pt];
			\draw[fill=white, thick] (-6, 2) circle [radius=15pt];
			\draw[fill=white, thick] (-3, 2) circle [radius=15pt];
			\draw[fill=white, thick] (-1.5, 2) circle [radius=15pt];
			\draw[fill=white, thick] (-3.75, 4) circle [radius=19pt];
			\draw[fill=white, thick] (-.75, 4) circle [radius=15pt];
			\draw[fill=white, thick] (1.5, 2) circle [radius=15pt];
			\draw[fill=white, thick] (.75, 4) circle [radius=19pt];
			\draw[fill=white, thick] (3.75, 4) circle [radius=15pt];
			\draw[fill=white, thick] (3, 2) circle [radius=19pt];
			\draw[fill=white, thick] (6, 2) circle [radius=15pt];
			\node at (0, 0) {$x_0$};
			\node at (-6, 2) {$y_1$};
			\node at (-4.5, 2) {$...$};
			\node at (-3, 2) {$y_{i_1}$};
			\node at (-1.5, 2) {$x_1$};
			\node at (-3.75, 4) {\scalebox{0.8}{$y_{i_1+1}$}};
			\node at (-2.25, 4) {$...$};
			\node at (-0.75, 4) {$y_{j_1}$};
			\node at (0, 2) {$...$};
			\node at (1.5, 2) {$x_n$};
			\node at (0.75, 4) {\scalebox{0.8}{$y_{i_n+1}$}};
			\node at (2.25, 4) {$...$};
			\node at (3.75, 4) {$y_{j_n}$};
			\node at (3, 2) {\scalebox{0.8}{$y_{j_n+1}$}};
			\node at (4.5, 2) {$...$};
			\node at (6, 2) {$y_{m}$};
		\end{tikzpicture}}}
	\]
	with:
	$$\varepsilon = (j_n-j_{n-1}) + 2(j_{n-1}-j_{n-2}) + \dots + (n-1)(j_2 - j_1) +  n(j_1 - 0)\equiv \sum_{\ell=1}^n j_\ell$$
\end{Proposition}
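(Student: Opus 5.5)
The plan is to compute the sign directly from the recipe stated just before Proposition~\ref{prp:ks}. Every edge has degree $1$. On each side of the defining relation the edges come with some order, and $(-1)^\varepsilon$ is the signature of the permutation that takes the order produced by the composition to the fixed left-to-right order (Definition~\ref{def:l2ro}) of the resulting planar tree. So the proof splits into two steps: describe the left-to-right order on each tree involved, then count inversions.

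\emph{Step 1: the left-to-right order in general.} Apply Definition~\ref{def:dro} to a corolla $x_0\{x_1,\dots,x_n\}$. The depth-right ordering visits $x_0,x_n,x_{n-1},\dots,x_1$. Each edge is labelled by its child endpoint, and reversing the order gives $(x_0,x_1)<\dots<(x_0,x_n)$; so on corollas the left-to-right order is the planar order of the edges. For a general tree, the depth-right ordering is a right-first preorder: a vertex is visited immediately before its own subtree, and the subtrees of its children are visited from right to left. Reversing it, I will prove by induction on the height the following description. The edges of the leftmost child subtree come first (recursively), then the edge joining the root to that leftmost child, and then the same pattern for the next child, and so on. In short, every edge from a vertex to a child comes immediately after all the edges of that child's subtree.

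\emph{Step 2: the composite trees.} Apply Step~1 to a tree on the right-hand side of the relation, indexed by $0\le i_1\le j_1\le\dots\le i_n\le j_n\le m$. Its edges come in the following order:
\begin{itemize}
\item the edges $(x_0,y_1),\dots,(x_0,y_{i_1})$;
\item then $(x_1,y_{i_1+1}),\dots,(x_1,y_{j_1})$, followed by $(x_0,x_1)$;
\item then $(x_0,y_{j_1+1}),\dots,(x_0,y_{i_2})$, and so on through $x_n$;
\item finally $(x_0,y_{j_n+1}),\dots,(x_0,y_m)$.
\end{itemize}
Within this order the $y$-edges appear in their original order, identified with the edges $y_1,\dots,y_m$ of the corolla with root $*$. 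The $x$-edges also keep their order. Hence the permutation is a shuffle of the block of $x$-edges with the block of $y$-edges, and its sign is determined by how many $y$-edges each $x$-edge ends up passing.

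\emph{Step 3: counting.} The composed expression is written $x_0\{x_1,\dots,x_n\}\{y_1,\dots,y_m\}$, so the block of $x$-edges comes before the block of $y$-edges. In the target order, the edge $(x_0,x_k)$ must move to the right past exactly the $y$-edges of index $\le j_k$, that is past $j_k$ edges. Hence $\varepsilon=\sum_{k} j_k$. Regrouping by the intervals $(j_{\ell-1},j_\ell]$ gives the displayed formula: the $y$'s in $(j_{\ell-1},j_\ell]$ are passed by $x_\ell,\dots,x_n$, that is by $n-\ell+1$ edges.

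The main obstacle is pinning down the ordering convention of the composition, that is, which block of edges counts as coming first. With the opposite placement each $x_k$ passes the $m-j_k$ edges after it, and $\varepsilon$ changes by $nm$, which is a constant independent of the summand. I will settle this by checking the smallest case $n=m=1$ against the edge ordering the convention prescribes. Everything else is the inductive description in Step~1, which is routine.
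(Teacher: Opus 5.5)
Your proposal is correct and is essentially the paper's proof made explicit. The paper's one-line argument is your Step 3 count: the $y$-edges with index in $(j_{\ell-1},j_\ell]$ end up before $x_\ell,\dots,x_n$. Your Step 1 postorder description of the left-to-right ordering supplies the target order that the paper leaves implicit.

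On the convention: putting the $x$-edges first is also what the paper's proof does, even though the left factor of $\circ_*$ in the display is the $y$-corolla. Reading ``left term first'' literally on the display would give $\sum_\ell (m-j_\ell)$ instead, which is the same relation up to the global sign $(-1)^{nm}$. So your $n=m=1$ check can only expose this choice, not decide it.
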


\begin{proof}
	Let's apply the strategy we discussed in the previous paragraph. 
	The edges attached to the vertices $y_{j_{n-1}+1}$ up to $y_{j_n}$ are now before the edge attached to $x_n$, the edges attached to the vertices $y_{j_{n-2}+1}$ up to $y_{i_{n-1}}$ are now before the edges attached to $x_{n-1}$ and $x_n$, and so on ...
\end{proof}

\begin{Remark}
	We have a lot of possible choices to order the edges.
	We have chosen this specific one for two reasons.
	On a first hand, the simplicity of the formula we get.
	On a second edge, it will be nicely behaving with the noncrossing 2-partitions of the next section.
\end{Remark}

\section{Noncrossing 2-Partition}

\subsection{Posets and poset cohomology}

Let us recall the definition of noncrossing 2-partitions as defined in \cite{E80}, and the operadic structure of their cohomology defined in \cite{DD25}.

\begin{Definition}
	Let $n\in \mathbb{N}$, and $\rho\vdash\{1,\dots,n\}$ a partition of $\{1,\dots,n\}$.
	A \emph{crossing} of $\rho$ is a quadruple $(i_1<i_2<i_3<i_4)$ such that we have $q_1\neq q_2\in\rho$ with $i_1,i_3\in q_1$, and $i_2,i_4\in q_2$.
	The partition $\rho$ is a \emph{noncrossing partition of $n$} if it has no crossings.\\
	Let $N$ a finite set. 
	A \emph{noncrossing 2-partition of $N$} is the data of:
	\begin{itemize}
		\item a noncrossing partition $\rho$ of $n=|N|$;
		\item a partition $\pi$ of $N$;
		\item a bijection $f:\pi\to\rho$ such that $|f(p)|=|p|$ for any $p\in\pi$.
	\end{itemize}
\end{Definition}

Let us now define the partial order on noncrossing 2-partitions.

\begin{Definition}
	We recall that we have a partial ordering on partitions.
	Let $\pi,\pi'\vdash N$, we have $\pi\geq\pi'$ if for all $p\in\pi$ we have $p'\in\pi$ such that $p\subseteq p'$.
	We denote by $\Pi(N)$ the partition poset of $N$, we may notice that $\Pi$ is a functor $\mathbb{B}\to \mathrm{PoSet}$ with $\mathrm{PoSet}$ the category of posets.\\
	Let $(\rho,\pi,f)$ and $(\rho',\pi',f')$ noncrossing 2-partitions of $N$.
	Then $(\rho,\pi,f)\geq(\rho',\pi',f')$ if $\rho\geq\rho'$, $\pi\geq \pi'$, and for $p\in\pi$, $p'\in \pi$, we have $p\subseteq p'\Rightarrow f(p)\subseteq f'(p')$.
	Let $\Pi_2(N)$ the poset defined by the noncrossing 2-partition of $N$.
	We may notice that $\Pi_2$ is also a functor $\mathbb{B}\to \mathrm{PoSet}$.
\end{Definition}

\begin{Definition}
	We define $h^\bullet(\mathrm{P})$ with $\mathrm{P}$ a poset, as the cohomology of $\mathrm{P}$ relative to its extremal elements.
	More explicitly, this is the homology of the chain complex $c^\bullet(\mathrm{P})$ where $c^k(\mathrm{P})$ is the span of chains $x_0<\dots<x_k$ of length $k$ of $\mathrm{P}$ with $x_0$ minimal and $x_k$ maximal, and the differential is the natural insertion of an element of $\mathrm{P}$ in the chain:
	$$d(x_0<\dots<x_k)=\sum_{i=1}^k (-1)^i \sum_{x_{i-1}<y<x_i} x_0<\dots<x_{i-1}<y<x_i<\dots<x_k$$
\end{Definition}

Because $\Pi_2$ is a functor $\mathbb{B}\to \mathrm{PoSet}$, we have that $h^\bullet(\Pi_2)$ is a functor $\mathbb{B}\to \mathrm{gr}\mathrm{Vect}$, and thus, a graded linear species.
Let us recall the following result on $h^\bullet(\Pi_2(N))$:

\begin{Proposition}[{\cite[Proposition 5.8]{DD25}}]
	Let $N$ a finite set with $n$ elements, then $h^\bullet(\Pi_2(N))$ is concentrated in degree $n-1$.
	Moreover, let $\Lambda h^\bullet(\Pi_2(N))$ the $n-1$ shift of $h^\bullet(\Pi_2(N))$, then the graded linear species $\Lambda h^\bullet(\Pi_2)$ is concentrated in degree $0$ and is isomorphic to the species $\mathcal{PT}$.
\end{Proposition}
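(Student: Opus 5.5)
The plan is to reduce the whole computation to the lattice $\mathrm{NC}(n)$ of noncrossing partitions of $\{1,\dots,n\}$. The two observations are that $\Pi_2(N)$ has a unique maximal element, and that every upper interval starting at a minimal element is isomorphic to $\mathrm{NC}(n)$.

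\textbf{Step 1: extremal elements.} First I identify the extremal elements of $\Pi_2(N)$.
\begin{itemize}
\item The unique maximal element is $\hat 1=(\{[n]\},\{N\},f)$, where $f$ is the only possible bijection.
\item The minimal elements are the triples $x_\beta$ with $\rho$ and $\pi$ both discrete. Such an $f$ is just a bijection $\beta:N\to\{1,\dots,n\}$.
\end{itemize}
So there are exactly $n!$ minimal elements. The group $\mathfrak{S}_N$ acts on them simply transitively, by $\sigma\cdot x_\beta=x_{\beta\circ\sigma^{-1}}$.

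\textbf{Step 2: the upper interval is $\mathrm{NC}(n)$.} Fix $\beta$. Let $(\rho,\pi,f)\geq x_\beta$. The compatibility condition says each singleton $\{a\}\subseteq p\in\pi$ is sent into $f(p)$, so $\beta(p)\subseteq f(p)$. Since $|f(p)|=|p|$, this forces $f(p)=\beta(p)$, and hence $\pi=\beta^{-1}(\rho)$. Conversely, every noncrossing $\rho$ gives such an element, and the order matches the refinement order on $\rho$. Therefore
\[
[x_\beta,\hat 1]\cong \mathrm{NC}(n).
\]
This is the step I expect to need the most care. One must check that the order condition $p\subseteq p'\Rightarrow f(p)\subseteq f'(p')$ on $\Pi_2$ exactly reproduces refinement of the $\rho$'s under this identification.

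\textbf{Step 3: splitting the complex.} Every chain in $c^\bullet(\Pi_2(N))$ starts at some minimal $x_\beta$ and lies in $[x_\beta,\hat 1]$. The differential only inserts elements strictly between $x_0$ and $x_k$, so it preserves the starting point. Hence, as complexes,
\[
c^\bullet(\Pi_2(N))\cong\bigoplus_{\beta}c^\bullet([x_\beta,\hat 1])\cong \mathbb{K}[\mathfrak{S}_N]\otimes c^\bullet(\mathrm{NC}(n)),
\]
and this is equivariant: $\mathfrak{S}_N$ permutes the summands freely and transitively. Taking cohomology gives
\[
h^\bullet(\Pi_2(N))\cong\mathbb{K}[\mathfrak{S}_N]\otimes h^\bullet(\mathrm{NC}(n)),
\]
where $\mathfrak{S}_N$ acts only on the left factor.

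\textbf{Step 4: cohomology of $\mathrm{NC}(n)$.} The lattice $\mathrm{NC}(n)$ is graded of rank $n-1$ and EL-shellable (Björner). Its relative cohomology is therefore concentrated in top degree $n-1$. The dimension there is $|\mu_{\mathrm{NC}(n)}(\hat 0,\hat 1)|$, which equals the Catalan number $\mathrm{Cat}_{n-1}$ (Kreweras). This gives concentration in degree $n-1$, and after the shift $\Lambda$, degree $0$.

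\textbf{Step 5: identification with $\mathcal{PT}$.} We get $\Lambda h^\bullet(\Pi_2(N))\cong\mathbb{K}[\mathfrak{S}_N]^{\oplus \mathrm{Cat}_{n-1}}$. On the other side, $\mathfrak{S}_N$ acts freely on $\mathrm{PT}(N)$, because relabelling the vertices of a tree never fixes it unless $\sigma=\id$. The number of orbits is the number of unlabelled planar rooted trees with $n$ vertices, namely $\mathrm{Cat}_{n-1}$. So $\mathcal{PT}(N)$ is also $\mathrm{Cat}_{n-1}$ copies of the regular representation. Both are free modules of the same rank and the isomorphisms are natural in $N$, so the two species are isomorphic.

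To fix an explicit isomorphism for later use, I would choose a basis of $h^{n-1}(\mathrm{NC}(n))$ indexed by unlabelled planar trees. The natural choice, suggested by Example~\ref{expl:rpt2tnc}, is the noncrossing arc diagrams obtained from the depth-right ordering.
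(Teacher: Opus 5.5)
Your proof is correct. Note, though, that the paper gives no argument of its own for this proposition. It imports it from \cite{DD25}, and its only internal use is the dimension count invoked ``by equality of dimension'' in the normal-form theorem of Section~2.3.

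Your route is a self-contained replacement:
\begin{itemize}
\item the relative complex splits along the $n!$ discrete triples $x_\beta$, because the differential fixes the endpoints of a chain;
\item each summand is the relative complex of $\mathrm{NC}(n)$;
\item Björner's EL-shellability and Kreweras's value $|\mu|=\mathrm{Cat}_{n-1}$ give concentration in top degree and the dimension;
\item the free action of $\mathfrak{S}_N$ on $\mathrm{PT}(N)$, with $\mathrm{Cat}_{n-1}$ orbits, matches the representations.
\end{itemize}
Beyond the statement, this shows both sides are multiples of the regular representation, so the identification survives any sign twist one might build into $\Lambda$. What it does not produce is the specific isomorphism $t$ through tree-like chains that Section~3 needs. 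Your final sentence about choosing the arc-diagram basis is a plan, not a proof. Making it a proof requires an argument like the paper's rewriting system, for which your Steps~3--4 in turn supply the needed dimension.

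Two small points. First, with the paper's convention ($\pi\geq\pi'$ when $\pi$ refines $\pi'$), the one-block triple is the unique \emph{minimum} and the $x_\beta$ are the $n!$ \emph{maxima}, so you are working in the opposite poset. This is harmless: reversing chains changes the differential only by the sign $(-1)^{k+1}$ in degree $k$, so kernels and images are unchanged. Your use of the compatibility condition in Step~2 is in fact the one for $x_\beta$ being the finer element. Second, Step~1 should say why the discrete triples are the only extremal elements other than $\hat 1$. Any $(\rho,\pi,f)$ is comparable to $x_\beta$ for any $\beta$ with $\beta(p)=f(p)$ for all $p\in\pi$, and strictly so unless $\pi$ (hence $\rho$) is discrete.
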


\begin{Definition}
	Let $N$ a finite set.
	Let $a_N:\Pi_2(N)\to\Pi(N); (\pi,\rho,f)\mapsto \pi$, this is a natural transformation between the functors $\Pi_2$ and $\Pi$.
	For $x=(\pi,\rho,f)\in \Pi_2(N)$, let: 
	$$\psi_x:\left(\Pi_2\right)_{\geq x}\to\prod_{q\in\pi}\Pi_2(q);(\beta,\mu,g)\mapsto((\beta_{|q},\mu_{|q},g_{|q}))_{q\in\pi}$$
	where we denote by $\beta_{|q},\mu_{|q},g_{|q}$ the natural restriction of $\beta,\mu,g$ to $q\subseteq N$.
	Let $\sigma_\rho:\rho\to\{1,\dots,|\rho|\}$ the ordering of $\rho$ according to the minimal element of each block, meaning that for $q_1,q_2\in \rho$, we have $\sigma_\rho(q_1)>\sigma_\rho(q_2)$ if and only if $\min(q_1)>\min(q_2)$.
	Thus, for $\mu\leq \rho$ a noncrossing partition of $\{1,\dots,n\}$ we can see $\mu$ as a partition $\varphi_\rho(\mu)$ of $\rho$, moreover via this identification $\sigma_\rho:\rho\to\{1,\dots,|\rho|\}$, we have that $\varphi_\rho(\mu)$ is a noncrossing partition of $\{1,\dots,|\rho|\}$.
	Thus, we have a map:
	$$\varphi_x:\left(\Pi_2\right)_{\leq x}(N)\to\Pi_2(\pi);(\beta,\mu,g)\mapsto (\alpha,\varphi_\rho,h)$$
	with $\alpha$ the partition of $\pi$ induced by $\beta$, and $h$ the isomorphism $\pi\to\{1,\dots,|\rho|\}$ induced by $f$.
\end{Definition}

This is the \emph{operadic partition structure} of $\Pi_2$ as defined in \cite{DD25}. 
We refer to \cite{DD25} for the actual definition of an operadic partition structure, and for the definition of the operadic structure on the cohomology of an operadic partition poset, in particular for the noncrossing 2-partitions.
The definition we give is the minimal one required to carry out actual computations, which is enough for this article.
We refer to \cite{DD25} for the general theory of operadic partition posets, the general definition of the operadic structure induced on the cohomology, and more insight and motivation for the definition given below.

\begin{Definition}
	Let $A,B$ two disjoint finite sets, and $\pi=\{\{a\}\mid a\in A\}\sqcup\{B\}\vdash A\sqcup B$.
	We have that:
	$$\Pi_2(\pi)\simeq \Pi_2(A\sqcup\{*\}),\quad\text{and}\quad\prod_{q\in\pi}\Pi_2(q)\simeq\Pi_2(B)$$
	Let $x\in a_{A\sqcup B}^{-1}(\pi)$, and 
	$$\varphi_x^*:c^\bullet(\Pi_2(A\sqcup\{*\}))\to c^\bullet(\left(\Pi_2\right)_{\leq x}(A\sqcup B)),\quad\text{and}\quad
	\psi_x^*:c^\bullet(\Pi_2(B))\to c^\bullet(\left(\Pi_2\right)_{\geq x}(A\sqcup B))$$
	the maps induced by $\varphi_x, \psi_x$ on the cochains.
	We get partial composition maps by setting:
	$$\circ_* : c^\bullet(\Pi_2(A\sqcup\{*\}))\otimes c^\bullet(\Pi_2(B))\to\bigoplus_{x\in a_{A\sqcup B}^{-1}(\pi)} c^\bullet(\left(\Pi_2\right)_{\leq x}(A\sqcup B))\otimes c^\bullet(\left(\Pi_2\right)_{\geq x}(A\sqcup B))\to c^\bullet(\Pi_2(A\sqcup B)$$ 
\end{Definition}

By \cite[Theorem 2.8]{DD25}, the partial compositions above defined induce a structure of graded operad on $h^\bullet(\Pi_2)$.
Those partial compositions do not directly induce an operadic structure at the level of cochains since associativity only holds ``up to homotopy''.

\begin{Example}
	Let us copy the first part of the proof of \cite[Proposition 5.9]{DD25} to get an example of computation.
	Let $A=\{\gamma\}$, $B=\{\alpha,\beta\}$, and let us compute:
	\[\left(\vcenter{\hbox{\begin{tikzpicture}[scale=0.4]
		\draw[thick] (0, 0)  (2, 0) arc(0:180:1) circle;
		\draw[fill=white, thick] (0, 0) circle [radius=15pt];
		\draw[fill=white, thick] (2, 0) circle [radius=15pt];
		\node at (0, 0) {$*$};
		\node at (2, 0) {$\gamma$};
	\end{tikzpicture}}}<
	\vcenter{\hbox{\begin{tikzpicture}[scale=0.4]
		\draw[fill=white, thick] (0, 0) circle [radius=15pt];
		\draw[fill=white, thick] (2, 0) circle [radius=15pt];
		\node at (0, 0) {$*$};
		\node at (2, 0) {$\gamma$};
	\end{tikzpicture}}}\right) \circ_*
	\left(\vcenter{\hbox{\begin{tikzpicture}[scale=0.4]
		\draw[thick] (0, 0)  (2, 0) arc(0:180:1) circle;
		\draw[fill=white, thick] (0, 0) circle [radius=15pt];
		\draw[fill=white, thick] (2, 0) circle [radius=15pt];
		\node at (0, 0) {$\alpha$};
		\node at (2, 0) {$\beta$};
	\end{tikzpicture}}}<
	\vcenter{\hbox{\begin{tikzpicture}[scale=0.4]
		\draw[fill=white, thick] (0, 0) circle [radius=15pt];
		\draw[fill=white, thick] (2, 0) circle [radius=15pt];
		\node at (0, 0) {$\alpha$};
		\node at (2, 0) {$\beta$};
	\end{tikzpicture}}}\right)
	\]
	The partition $\pi$ of the previous definition is $\{\{\alpha,\beta\},\{\gamma\}\}$.
	Thus, we have:
	\[a^{-1}(\pi)=\left\{
	\vcenter{\hbox{\begin{tikzpicture}[scale=0.4]
		\draw[draw=none, thick] (0, 0)  (4, 0) arc(0:180:2) circle;
		\draw[thick] (0, 0)  (2, 0) arc(0:180:1) circle;
		\draw[fill=white, thick] (0, 0) circle [radius=15pt];
		\draw[fill=white, thick] (2, 0) circle [radius=15pt];
		\draw[fill=white, thick] (4, 0) circle [radius=15pt];
		\node at (0, 0) {$\alpha$};
		\node at (2, 0) {$\beta$};
		\node at (4, 0) {$\gamma$};
	\end{tikzpicture}}}\;,\;
	\vcenter{\hbox{\begin{tikzpicture}[scale=0.4]
		\draw[thick] (0, 0)  (4, 0) arc(0:180:2) circle;
		\draw[fill=white, thick] (0, 0) circle [radius=15pt];
		\draw[fill=white, thick] (2, 0) circle [radius=15pt];
		\draw[fill=white, thick] (4, 0) circle [radius=15pt];
		\node at (0, 0) {$\alpha$};
		\node at (2, 0) {$\gamma$};
		\node at (4, 0) {$\beta$};
	\end{tikzpicture}}}\;,\;
	\vcenter{\hbox{\begin{tikzpicture}[scale=0.4]
		\draw[draw=none, thick] (0, 0)  (4, 0) arc(0:180:2) circle;
		\draw[thick] (2, 0)  (4, 0) arc(0:180:1) circle;
		\draw[fill=white, thick] (0, 0) circle [radius=15pt];
		\draw[fill=white, thick] (2, 0) circle [radius=15pt];
		\draw[fill=white, thick] (4, 0) circle [radius=15pt];
		\node at (0, 0) {$\gamma$};
		\node at (2, 0) {$\alpha$};
		\node at (4, 0) {$\beta$};
	\end{tikzpicture}}}\right\}
	\]
	Let us denote those respectively $x$, $y$, and $z$.
	Let us point out that, since the elements inside the partitions are not ordered, we have that:
	\[\vcenter{\hbox{\begin{tikzpicture}[scale=0.4]
		\draw[thick] (0, 0)  (2, 0) arc(0:180:1) circle;
		\draw[fill=white, thick] (0, 0) circle [radius=15pt];
		\draw[fill=white, thick] (2, 0) circle [radius=15pt];
		\draw[fill=white, thick] (4, 0) circle [radius=15pt];
		\node at (0, 0) {$\alpha$};
		\node at (2, 0) {$\beta$};
		\node at (4, 0) {$\gamma$};
	\end{tikzpicture}}}\;=\;
	\vcenter{\hbox{\begin{tikzpicture}[scale=0.4]
		\draw[thick] (0, 0)  (2, 0) arc(0:180:1) circle;
		\draw[fill=white, thick] (0, 0) circle [radius=15pt];
		\draw[fill=white, thick] (2, 0) circle [radius=15pt];
		\draw[fill=white, thick] (4, 0) circle [radius=15pt];
		\node at (0, 0) {$\beta$};
		\node at (2, 0) {$\alpha$};
		\node at (4, 0) {$\gamma$};
	\end{tikzpicture}}}\]
	We have:
	\begin{align*}
	\varphi^*_x\left(\vcenter{\hbox{\begin{tikzpicture}[scale=0.4]
		\draw[thick] (0, 0)  (2, 0) arc(0:180:1) circle;
		\draw[fill=white, thick] (0, 0) circle [radius=15pt];
		\draw[fill=white, thick] (2, 0) circle [radius=15pt];
		\node at (0, 0) {$*$};
		\node at (2, 0) {$\gamma$};
	\end{tikzpicture}}}<
	\vcenter{\hbox{\begin{tikzpicture}[scale=0.4]
		\draw[fill=white, thick] (0, 0) circle [radius=15pt];
		\draw[fill=white, thick] (2, 0) circle [radius=15pt];
		\node at (0, 0) {$*$};
		\node at (2, 0) {$\gamma$};
	\end{tikzpicture}}}\right) &=
	\left(\vcenter{\hbox{\begin{tikzpicture}[scale=0.4]
		\draw[thick] (0, 0)  (2, 0) arc(0:180:1) circle;
		\draw[thick] (2, 0)  (4, 0) arc(0:180:1) circle;
		\draw[fill=white, thick] (0, 0) circle [radius=15pt];
		\draw[fill=white, thick] (2, 0) circle [radius=15pt];
		\draw[fill=white, thick] (4, 0) circle [radius=15pt];
		\node at (0, 0) {$\alpha$};
		\node at (2, 0) {$\beta$};
		\node at (4, 0) {$\gamma$};
	\end{tikzpicture}}}<
	\vcenter{\hbox{\begin{tikzpicture}[scale=0.4]
		\draw[thick] (0, 0)  (2, 0) arc(0:180:1) circle;
		\draw[fill=white, thick] (0, 0) circle [radius=15pt];
		\draw[fill=white, thick] (2, 0) circle [radius=15pt];
		\draw[fill=white, thick] (4, 0) circle [radius=15pt];
		\node at (0, 0) {$\alpha$};
		\node at (2, 0) {$\beta$};
		\node at (4, 0) {$\gamma$};
	\end{tikzpicture}}}\right)\\ 
	\varphi^*_y\left(\vcenter{\hbox{\begin{tikzpicture}[scale=0.4]
		\draw[thick] (0, 0)  (2, 0) arc(0:180:1) circle;
		\draw[fill=white, thick] (0, 0) circle [radius=15pt];
		\draw[fill=white, thick] (2, 0) circle [radius=15pt];
		\node at (0, 0) {$*$};
		\node at (2, 0) {$\gamma$};
	\end{tikzpicture}}}<
	\vcenter{\hbox{\begin{tikzpicture}[scale=0.4]
		\draw[fill=white, thick] (0, 0) circle [radius=15pt];
		\draw[fill=white, thick] (2, 0) circle [radius=15pt];
		\node at (0, 0) {$*$};
		\node at (2, 0) {$\gamma$};
	\end{tikzpicture}}}\right) &=
	\left(\vcenter{\hbox{\begin{tikzpicture}[scale=0.4]
		\draw[draw=none, thick] (0, 0)  (4, 0) arc(0:180:2) circle;
		\draw[thick] (0, 0)  (2, 0) arc(0:180:1) circle;
		\draw[thick] (2, 0)  (4, 0) arc(0:180:1) circle;
		\draw[fill=white, thick] (0, 0) circle [radius=15pt];
		\draw[fill=white, thick] (2, 0) circle [radius=15pt];
		\draw[fill=white, thick] (4, 0) circle [radius=15pt];
		\node at (0, 0) {$\alpha$};
		\node at (2, 0) {$\beta$};
		\node at (4, 0) {$\gamma$};
	\end{tikzpicture}}}<
	\vcenter{\hbox{\begin{tikzpicture}[scale=0.4]
		\draw[thick] (0, 0)  (4, 0) arc(0:180:2) circle;
		\draw[fill=white, thick] (0, 0) circle [radius=15pt];
		\draw[fill=white, thick] (2, 0) circle [radius=15pt];
		\draw[fill=white, thick] (4, 0) circle [radius=15pt];
		\node at (0, 0) {$\alpha$};
		\node at (2, 0) {$\gamma$};
		\node at (4, 0) {$\beta$};
	\end{tikzpicture}}}\right)\\ 
	\varphi^*_z\left(\vcenter{\hbox{\begin{tikzpicture}[scale=0.4]
		\draw[thick] (0, 0)  (2, 0) arc(0:180:1) circle;
		\draw[fill=white, thick] (0, 0) circle [radius=15pt];
		\draw[fill=white, thick] (2, 0) circle [radius=15pt];
		\node at (0, 0) {$*$};
		\node at (2, 0) {$\gamma$};
	\end{tikzpicture}}}<
	\vcenter{\hbox{\begin{tikzpicture}[scale=0.4]
		\draw[fill=white, thick] (0, 0) circle [radius=15pt];
		\draw[fill=white, thick] (2, 0) circle [radius=15pt];
		\node at (0, 0) {$*$};
		\node at (2, 0) {$\gamma$};
	\end{tikzpicture}}}\right) &=0
	\end{align*}
	Similarly, let us compute the $\psi^*$ part.
	We have:
	\begin{align*}
	\psi^*_x\left(\left(\vcenter{\hbox{\begin{tikzpicture}[scale=0.4]
		\draw[thick] (0, 0)  (2, 0) arc(0:180:1) circle;
		\draw[fill=white, thick] (0, 0) circle [radius=15pt];
		\draw[fill=white, thick] (2, 0) circle [radius=15pt];
		\node at (0, 0) {$\alpha$};
		\node at (2, 0) {$\beta$};
	\end{tikzpicture}}}<
	\vcenter{\hbox{\begin{tikzpicture}[scale=0.4]
		\draw[fill=white, thick] (0, 0) circle [radius=15pt];
		\draw[fill=white, thick] (2, 0) circle [radius=15pt];
		\node at (0, 0) {$\alpha$};
		\node at (2, 0) {$\beta$};
	\end{tikzpicture}}}\right)\otimes 
	\vcenter{\hbox{\begin{tikzpicture}[scale=0.4]
		\draw[fill=white, thick] (0, 0) circle [radius=15pt];
		\node at (0, 0) {$\gamma$};
	\end{tikzpicture}}}\right) &=
	\left(\vcenter{\hbox{\begin{tikzpicture}[scale=0.4]
		\draw[thick] (0, 0)  (2, 0) arc(0:180:1) circle;
		\draw[fill=white, thick] (0, 0) circle [radius=15pt];
		\draw[fill=white, thick] (2, 0) circle [radius=15pt];
		\draw[fill=white, thick] (4, 0) circle [radius=15pt];
		\node at (0, 0) {$\alpha$};
		\node at (2, 0) {$\beta$};
		\node at (4, 0) {$\gamma$};
	\end{tikzpicture}}}<
	\vcenter{\hbox{\begin{tikzpicture}[scale=0.4]
		\draw[draw=none, thick] (0, 0)  (2, 0) arc(0:180:1) circle;
		\draw[fill=white, thick] (0, 0) circle [radius=15pt];
		\draw[fill=white, thick] (2, 0) circle [radius=15pt];
		\draw[fill=white, thick] (4, 0) circle [radius=15pt];
		\node at (0, 0) {$\alpha$};
		\node at (2, 0) {$\beta$};
		\node at (4, 0) {$\gamma$};
	\end{tikzpicture}}}\right)\\ 
	\psi^*_y\left(\left(\vcenter{\hbox{\begin{tikzpicture}[scale=0.4]
		\draw[thick] (0, 0)  (2, 0) arc(0:180:1) circle;
		\draw[fill=white, thick] (0, 0) circle [radius=15pt];
		\draw[fill=white, thick] (2, 0) circle [radius=15pt];
		\node at (0, 0) {$\alpha$};
		\node at (2, 0) {$\beta$};
	\end{tikzpicture}}}<
	\vcenter{\hbox{\begin{tikzpicture}[scale=0.4]
		\draw[fill=white, thick] (0, 0) circle [radius=15pt];
		\draw[fill=white, thick] (2, 0) circle [radius=15pt];
		\node at (0, 0) {$\alpha$};
		\node at (2, 0) {$\beta$};
	\end{tikzpicture}}}\right)\otimes 
	\vcenter{\hbox{\begin{tikzpicture}[scale=0.4]
		\draw[fill=white, thick] (0, 0) circle [radius=15pt];
		\node at (0, 0) {$\gamma$};
	\end{tikzpicture}}}\right) &=
	\left(\vcenter{\hbox{\begin{tikzpicture}[scale=0.4]
		\draw[thick] (0, 0)  (4, 0) arc(0:180:2) circle;
		\draw[fill=white, thick] (0, 0) circle [radius=15pt];
		\draw[fill=white, thick] (2, 0) circle [radius=15pt];
		\draw[fill=white, thick] (4, 0) circle [radius=15pt];
		\node at (0, 0) {$\alpha$};
		\node at (2, 0) {$\gamma$};
		\node at (4, 0) {$\beta$};
	\end{tikzpicture}}}<
	\vcenter{\hbox{\begin{tikzpicture}[scale=0.4]
		\draw[draw=none, thick] (0, 0)  (4, 0) arc(0:180:2) circle;
		\draw[fill=white, thick] (0, 0) circle [radius=15pt];
		\draw[fill=white, thick] (2, 0) circle [radius=15pt];
		\draw[fill=white, thick] (4, 0) circle [radius=15pt];
		\node at (0, 0) {$\alpha$};
		\node at (2, 0) {$\gamma$};
		\node at (4, 0) {$\beta$};
	\end{tikzpicture}}}\right)\\ 
	\psi^*_z\left(\left(\vcenter{\hbox{\begin{tikzpicture}[scale=0.4]
		\draw[thick] (0, 0)  (2, 0) arc(0:180:1) circle;
		\draw[fill=white, thick] (0, 0) circle [radius=15pt];
		\draw[fill=white, thick] (2, 0) circle [radius=15pt];
		\node at (0, 0) {$\alpha$};
		\node at (2, 0) {$\beta$};
	\end{tikzpicture}}}<
	\vcenter{\hbox{\begin{tikzpicture}[scale=0.4]
		\draw[fill=white, thick] (0, 0) circle [radius=15pt];
		\draw[fill=white, thick] (2, 0) circle [radius=15pt];
		\node at (0, 0) {$\alpha$};
		\node at (2, 0) {$\beta$};
	\end{tikzpicture}}}\right)\otimes 
	\vcenter{\hbox{\begin{tikzpicture}[scale=0.4]
		\draw[fill=white, thick] (0, 0) circle [radius=15pt];
		\node at (0, 0) {$\gamma$};
	\end{tikzpicture}}}\right) &=
	\left(\vcenter{\hbox{\begin{tikzpicture}[scale=0.4]
		\draw[thick] (2, 0)  (4, 0) arc(0:180:1) circle;
		\draw[fill=white, thick] (0, 0) circle [radius=15pt];
		\draw[fill=white, thick] (2, 0) circle [radius=15pt];
		\draw[fill=white, thick] (4, 0) circle [radius=15pt];
		\node at (0, 0) {$\gamma$};
		\node at (2, 0) {$\alpha$};
		\node at (4, 0) {$\beta$};
	\end{tikzpicture}}}<
	\vcenter{\hbox{\begin{tikzpicture}[scale=0.4]
		\draw[draw=none, thick] (2, 0)  (4, 0) arc(0:180:1) circle;
		\draw[fill=white, thick] (0, 0) circle [radius=15pt];
		\draw[fill=white, thick] (2, 0) circle [radius=15pt];
		\draw[fill=white, thick] (4, 0) circle [radius=15pt];
		\node at (0, 0) {$\gamma$};
		\node at (2, 0) {$\alpha$};
		\node at (4, 0) {$\beta$};
	\end{tikzpicture}}}\right) 
	\end{align*}
	To finish the computation, we need to concatenate the chains given by $\varphi^*$ and $\psi^*$, and sum over $a^{-1}(\pi)$.
	\begin{multline*}
	\left(\vcenter{\hbox{\begin{tikzpicture}[scale=0.4]
		\draw[thick] (0, 0)  (2, 0) arc(0:180:1) circle;
		\draw[fill=white, thick] (0, 0) circle [radius=15pt];
		\draw[fill=white, thick] (2, 0) circle [radius=15pt];
		\node at (0, 0) {$*$};
		\node at (2, 0) {$\gamma$};
	\end{tikzpicture}}}<
	\vcenter{\hbox{\begin{tikzpicture}[scale=0.4]
		\draw[fill=white, thick] (0, 0) circle [radius=15pt];
		\draw[fill=white, thick] (2, 0) circle [radius=15pt];
		\node at (0, 0) {$*$};
		\node at (2, 0) {$\gamma$};
	\end{tikzpicture}}}\right) \circ_*
	\left(\vcenter{\hbox{\begin{tikzpicture}[scale=0.4]
		\draw[thick] (0, 0)  (2, 0) arc(0:180:1) circle;
		\draw[fill=white, thick] (0, 0) circle [radius=15pt];
		\draw[fill=white, thick] (2, 0) circle [radius=15pt];
		\node at (0, 0) {$\alpha$};
		\node at (2, 0) {$\beta$};
	\end{tikzpicture}}}<
	\vcenter{\hbox{\begin{tikzpicture}[scale=0.4]
		\draw[fill=white, thick] (0, 0) circle [radius=15pt];
		\draw[fill=white, thick] (2, 0) circle [radius=15pt];
		\node at (0, 0) {$\alpha$};
		\node at (2, 0) {$\beta$};
	\end{tikzpicture}}}\right)=\\
	\left(\vcenter{\hbox{\begin{tikzpicture}[scale=0.4]
		\draw[thick] (0, 0)  (2, 0) arc(0:180:1) circle;
		\draw[thick] (0, 0)  (4, 0) arc(0:180:1) circle;
		\draw[fill=white, thick] (0, 0) circle [radius=15pt];
		\draw[fill=white, thick] (2, 0) circle [radius=15pt];
		\draw[fill=white, thick] (4, 0) circle [radius=15pt];
		\node at (0, 0) {$\alpha$};
		\node at (2, 0) {$\beta$};
		\node at (4, 0) {$\gamma$};
	\end{tikzpicture}}}<
	\vcenter{\hbox{\begin{tikzpicture}[scale=0.4]
		\draw[thick] (0, 0)  (2, 0) arc(0:180:1) circle;
		\draw[fill=white, thick] (0, 0) circle [radius=15pt];
		\draw[fill=white, thick] (2, 0) circle [radius=15pt];
		\draw[fill=white, thick] (4, 0) circle [radius=15pt];
		\node at (0, 0) {$\alpha$};
		\node at (2, 0) {$\beta$};
		\node at (4, 0) {$\gamma$};
	\end{tikzpicture}}}<
	\vcenter{\hbox{\begin{tikzpicture}[scale=0.4]
		\draw[draw=none, thick] (0, 0)  (2, 0) arc(0:180:1) circle;
		\draw[fill=white, thick] (0, 0) circle [radius=15pt];
		\draw[fill=white, thick] (2, 0) circle [radius=15pt];
		\draw[fill=white, thick] (4, 0) circle [radius=15pt];
		\node at (0, 0) {$\alpha$};
		\node at (2, 0) {$\beta$};
		\node at (4, 0) {$\gamma$};
	\end{tikzpicture}}}\right)+\\ 
	\left(\vcenter{\hbox{\begin{tikzpicture}[scale=0.4]
		\draw[draw=none, thick] (0, 0)  (4, 0) arc(0:180:2) circle;
		\draw[thick] (0, 0)  (2, 0) arc(0:180:1) circle;
		\draw[thick] (0, 0)  (4, 0) arc(0:180:1) circle;
		\draw[fill=white, thick] (0, 0) circle [radius=15pt];
		\draw[fill=white, thick] (2, 0) circle [radius=15pt];
		\draw[fill=white, thick] (4, 0) circle [radius=15pt];
		\node at (0, 0) {$\alpha$};
		\node at (2, 0) {$\gamma$};
		\node at (4, 0) {$\beta$};
	\end{tikzpicture}}}<
	\vcenter{\hbox{\begin{tikzpicture}[scale=0.4]
		\draw[thick] (0, 0)  (4, 0) arc(0:180:2) circle;
		\draw[fill=white, thick] (0, 0) circle [radius=15pt];
		\draw[fill=white, thick] (2, 0) circle [radius=15pt];
		\draw[fill=white, thick] (4, 0) circle [radius=15pt];
		\node at (0, 0) {$\alpha$};
		\node at (2, 0) {$\gamma$};
		\node at (4, 0) {$\beta$};
	\end{tikzpicture}}}<
	\vcenter{\hbox{\begin{tikzpicture}[scale=0.4]
		\draw[draw=none, thick] (0, 0)  (4, 0) arc(0:180:2) circle;
		\draw[fill=white, thick] (0, 0) circle [radius=15pt];
		\draw[fill=white, thick] (2, 0) circle [radius=15pt];
		\draw[fill=white, thick] (4, 0) circle [radius=15pt];
		\node at (0, 0) {$\alpha$};
		\node at (2, 0) {$\gamma$};
		\node at (4, 0) {$\beta$};
	\end{tikzpicture}}}\right)		
	\end{multline*}

\end{Example}

We should notice the following important direct consequences of the definition of the partial composition $p_1\circ_* p_2$:
\begin{itemize}
	\item Each element $\lambda\in a^{-1}(\pi)$ gives rise to either one or zero terms.
	\item In each term of the partial composition, the elements of $A$ are in the same relative order as in $p_1$, and the elements of $B$ are in the same relative order as in $p_2$.
	\item In each term of the partial composition, the minimal element of $B$ in $p_2$ gets the same relative position as $*$ in $p_1$. 
\end{itemize}
Let us point out each of those assertions in the above example.
The first one is clear, for the second one, we should notice that we have $\alpha<\beta$ in each of the terms, and for the last one, we have $\alpha<\gamma$ in each of the terms.

\subsection{Complete Paths}

Because the cohomology of noncrossing 2-partitions is concentrated in maximal degrees, we will be mainly interested in $h^{n-1}(\Pi_2(N))$, and $c^{n-1}(\Pi_2(N))$.
Let us introduce some notations.
Let $(x_0<...<x_n)\in c^{n-1}(\Pi_2(N))$, because this chain is of maximal length, each transition $x_{i-1}<x_i$ is given by the fusion of two blocks of the underlying partitions.
Thus, we will represent those chains in one line by the consecutive fusions of blocks, by linking the minimum of each block.
Thus, we get:
\[
	\left(\vcenter{\hbox{\begin{tikzpicture}[scale=0.4]
		\draw[thick] (0, 0)  (2, 0) arc(0:180:1) circle;
		\draw[thick] (0, 0)  (4, 0) arc(0:180:1) circle;
		\draw[fill=white, thick] (0, 0) circle [radius=15pt];
		\draw[fill=white, thick] (2, 0) circle [radius=15pt];
		\draw[fill=white, thick] (4, 0) circle [radius=15pt];
		\node at (0, 0) {$\alpha$};
		\node at (2, 0) {$\beta$};
		\node at (4, 0) {$\gamma$};
	\end{tikzpicture}}}<
	\vcenter{\hbox{\begin{tikzpicture}[scale=0.4]
		\draw[thick] (0, 0)  (2, 0) arc(0:180:1) circle;
		\draw[fill=white, thick] (0, 0) circle [radius=15pt];
		\draw[fill=white, thick] (2, 0) circle [radius=15pt];
		\draw[fill=white, thick] (4, 0) circle [radius=15pt];
		\node at (0, 0) {$\alpha$};
		\node at (2, 0) {$\beta$};
		\node at (4, 0) {$\gamma$};
	\end{tikzpicture}}}<
	\vcenter{\hbox{\begin{tikzpicture}[scale=0.4]
		\draw[draw=none, thick] (0, 0)  (2, 0) arc(0:180:1) circle;
		\draw[fill=white, thick] (0, 0) circle [radius=15pt];
		\draw[fill=white, thick] (2, 0) circle [radius=15pt];
		\draw[fill=white, thick] (4, 0) circle [radius=15pt];
		\node at (0, 0) {$\alpha$};
		\node at (2, 0) {$\beta$};
		\node at (4, 0) {$\gamma$};
	\end{tikzpicture}}}\right)\;=\;
	\vcenter{\hbox{\begin{tikzpicture}[scale=0.4]
		\draw[thick] (0, 0)  (2, 0) arc(0:180:1) circle;
		\draw[thick] (0, 0)  (4, 0) arc(0:180:2) circle;
		\draw[fill=white, thick] (0, 0) circle [radius=15pt];
		\draw[fill=white, thick] (2, 0) circle [radius=15pt];
		\draw[fill=white, thick] (4, 0) circle [radius=15pt];
		\node at (0, 0) {$\alpha$};
		\node at (2, 0) {$\beta$};
		\node at (4, 0) {$\gamma$};
		\node at (2, -1) {$1$};
		\node at (4, -1) {$2$};
	\end{tikzpicture}}}
\]
where the numbers below indicate the order of the fusions.
\[
	\left(\vcenter{\hbox{\begin{tikzpicture}[scale=0.4]
		\draw[draw=none, thick] (0, 0)  (4, 0) arc(0:180:2) circle;
		\draw[thick] (0, 0)  (2, 0) arc(0:180:1) circle;
		\draw[thick] (0, 0)  (4, 0) arc(0:180:1) circle;
		\draw[fill=white, thick] (0, 0) circle [radius=15pt];
		\draw[fill=white, thick] (2, 0) circle [radius=15pt];
		\draw[fill=white, thick] (4, 0) circle [radius=15pt];
		\node at (0, 0) {$\alpha$};
		\node at (2, 0) {$\beta$};
		\node at (4, 0) {$\gamma$};
	\end{tikzpicture}}}<
	\vcenter{\hbox{\begin{tikzpicture}[scale=0.4]
		\draw[thick] (0, 0)  (4, 0) arc(0:180:2) circle;
		\draw[fill=white, thick] (0, 0) circle [radius=15pt];
		\draw[fill=white, thick] (2, 0) circle [radius=15pt];
		\draw[fill=white, thick] (4, 0) circle [radius=15pt];
		\node at (0, 0) {$\alpha$};
		\node at (2, 0) {$\gamma$};
		\node at (4, 0) {$\beta$};
	\end{tikzpicture}}}<
	\vcenter{\hbox{\begin{tikzpicture}[scale=0.4]
		\draw[draw=none, thick] (0, 0)  (4, 0) arc(0:180:2) circle;
		\draw[fill=white, thick] (0, 0) circle [radius=15pt];
		\draw[fill=white, thick] (2, 0) circle [radius=15pt];
		\draw[fill=white, thick] (4, 0) circle [radius=15pt];
		\node at (0, 0) {$\alpha$};
		\node at (2, 0) {$\gamma$};
		\node at (4, 0) {$\beta$};
	\end{tikzpicture}}}\right)\;=\;
	\vcenter{\hbox{\begin{tikzpicture}[scale=0.4]
		\draw[thick] (0, 0)  (2, 0) arc(0:180:1) circle;
		\draw[thick] (0, 0)  (4, 0) arc(0:180:2) circle;
		\draw[fill=white, thick] (0, 0) circle [radius=15pt];
		\draw[fill=white, thick] (2, 0) circle [radius=15pt];
		\draw[fill=white, thick] (4, 0) circle [radius=15pt];
		\node at (0, 0) {$\alpha$};
		\node at (2, 0) {$\gamma$};
		\node at (4, 0) {$\beta$};
		\node at (2, -1) {$2$};
		\node at (4, -1) {$1$};
	\end{tikzpicture}}}
\]
With this notation, we have:
\[
	\vcenter{\hbox{\begin{tikzpicture}[scale=0.4]
		\draw[draw=none, thick] (0, 0) -- (0, 2);
		\draw[thick] (0, 0)  (2, 0) arc(0:180:1) circle;
		\draw[fill=white, thick] (0, 0) circle [radius=15pt];
		\draw[fill=white, thick] (2, 0) circle [radius=15pt];
		\node at (0, 0) {$*$};
		\node at (2, 0) {$\gamma$};
		\node at (2, -1) {$1$};
	\end{tikzpicture}}}\circ_*
	\vcenter{\hbox{\begin{tikzpicture}[scale=0.4]
		\draw[draw=none, thick] (0, 0) -- (0, 2);
		\draw[thick] (0, 0)  (2, 0) arc(0:180:1) circle;
		\draw[fill=white, thick] (0, 0) circle [radius=15pt];
		\draw[fill=white, thick] (2, 0) circle [radius=15pt];
		\node at (0, 0) {$\alpha$};
		\node at (2, 0) {$\beta$};
		\node at (2, -1) {$1$};
	\end{tikzpicture}}}\;=\;
	\vcenter{\hbox{\begin{tikzpicture}[scale=0.4]
		\draw[thick] (0, 0)  (2, 0) arc(0:180:1) circle;
		\draw[thick] (0, 0)  (4, 0) arc(0:180:2) circle;
		\draw[fill=white, thick] (0, 0) circle [radius=15pt];
		\draw[fill=white, thick] (2, 0) circle [radius=15pt];
		\draw[fill=white, thick] (4, 0) circle [radius=15pt];
		\node at (0, 0) {$\alpha$};
		\node at (2, 0) {$\beta$};
		\node at (4, 0) {$\gamma$};
		\node at (2, -1) {$1$};
		\node at (4, -1) {$2$};
	\end{tikzpicture}}}\;+\;
	\vcenter{\hbox{\begin{tikzpicture}[scale=0.4]
		\draw[thick] (0, 0)  (2, 0) arc(0:180:1) circle;
		\draw[thick] (0, 0)  (4, 0) arc(0:180:2) circle;
		\draw[fill=white, thick] (0, 0) circle [radius=15pt];
		\draw[fill=white, thick] (2, 0) circle [radius=15pt];
		\draw[fill=white, thick] (4, 0) circle [radius=15pt];
		\node at (0, 0) {$\alpha$};
		\node at (2, 0) {$\gamma$};
		\node at (4, 0) {$\beta$};
		\node at (2, -1) {$2$};
		\node at (4, -1) {$1$};
	\end{tikzpicture}}}
\]
It is clear from this notation that such a picture represents a chain of noncrossing 2-partitions if each node has exactly one fusion with an element to its left (except the leftmost element) and the fusions do not cross each other.
We should notice that this looks a lot like the picture in Example~\ref{expl:rpt2tnc} except that we have an ordering on the edges.
It should also motivate the convention used in Proposition~\ref{prp:ks}

\begin{Definition}
	Let $\mathrm{P}$ a poset, and $a\leq b\in \mathrm{P}$. 
	A \emph{complete path} from $a$ to $b$ is a chain 
	$$(a=x_0<\dots<x_k=b)$$ 
	such that $x_{i-1}\leq x\leq x_i$ implies that either $x=x_{i-1}$ or $x=x_i$, let $A_{a\to b}$ the vector space spanned by the complete paths from $a$ to $b$.
	Let $p_{a\to b}=(x_0<\dots<x_k)$, and $p_{b\to c}=(y_0<\dots<y_\ell)$ complete paths from $a$ to $b$, and from $b$ to $c$.
	Then their concatenation 
	$$p_{a\to c}=p_{a\to b}.p_{b\to c}=(x_0<\dots<x_k<y_1<\dots<y_\ell)$$
	is a complete path from $a$ to $c$.\\
	Let $A_\mathrm{P}=\bigoplus A_{a\to b}$ the \emph{algebra of complete paths} where the multiplication is the concatenation of paths.
	The algebra $A_\mathrm{P}$ is graded by the length on the paths, let $A_\mathrm{P}^k$ the $k$ component with respect to this grading.
	It is clear that $A_\mathrm{P}$ is generated by the paths of length $1$, let us denote $e_{a\to b}$ the complete path $a<b$.
\end{Definition}

Let us fix a finite set $N$ of size $n$, we denote by $A$ the algebra of complete paths of $\Pi_2(N)$.

\begin{Definition}
	Let $p=(x_0<x_1)$ a complete path in $A$.
	Let us define the \emph{weight} $w(p)\in \left(\{0,\dots,n-1\}^2\right)^*$ of $p$, let $\rho_0$ and $\rho_1$ the underlying non-crossing partitions of $x_0$ and $x_1$.
	Since $p$ is a complete path, $\rho_0$ is obtained from $\rho_1$ by fusing two parts, let us denote $q_1$ and $q_2$ those two parts with $\min(q_1)\leq \min(q_2)$.
	The weight of $p$ is $(n-\min(q_1),n-\min(q_2))$.
	We extend the weight to $A$ multiplicatively, $w(p_1.p_2)=w(p_1).w(p_2)$.
	To compare the weights, we endow $\left(\{0,\dots,n-1\}^2\right)^*$ with the lexicographic order.
\end{Definition}

\begin{Example}
	Let us consider the following chain: 
	\[\vcenter{\hbox{\begin{tikzpicture}[scale=0.4]
		\draw[thick] (0, 0)  (2, 0) arc(0:180:1) circle;
		\draw[thick] (0, 0)  (4, 0) arc(0:180:2) circle;
		\draw[fill=white, thick] (0, 0) circle [radius=15pt];
		\draw[fill=white, thick] (2, 0) circle [radius=15pt];
		\draw[fill=white, thick] (4, 0) circle [radius=15pt];
		\node at (0, 0) {$\alpha$};
		\node at (2, 0) {$\beta$};
		\node at (4, 0) {$\gamma$};
		\node at (2, -1) {$1$};
		\node at (4, -1) {$2$};
	\end{tikzpicture}}}\]
	Its weight is $(3-1,3-2).(3-1,3-3)=(2,1,2,0)$ since we first fuse the first and second elements, and then the first and the third elements.
	For:
	\[\vcenter{\hbox{\begin{tikzpicture}[scale=0.4]
		\draw[thick] (0, 0)  (2, 0) arc(0:180:1) circle;
		\draw[thick] (0, 0)  (4, 0) arc(0:180:2) circle;
		\draw[fill=white, thick] (0, 0) circle [radius=15pt];
		\draw[fill=white, thick] (2, 0) circle [radius=15pt];
		\draw[fill=white, thick] (4, 0) circle [radius=15pt];
		\node at (0, 0) {$\alpha$};
		\node at (2, 0) {$\gamma$};
		\node at (4, 0) {$\beta$};
		\node at (2, -1) {$2$};
		\node at (4, -1) {$1$};
	\end{tikzpicture}}}\]
	the weight we get is $(2,0,2,1)$.
\end{Example}

\begin{Proposition}
	Let $p$ a complete path from $a$ to $b$. The weight $w(p)$ entirely determines $p$.
\end{Proposition}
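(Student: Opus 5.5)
We argue by induction on the length of $p$, using that $p$ is a product of length-one complete paths $e_{x_{i-1}\to x_i}$ and that the weight is multiplicative. Because the starting point $a$ is part of the data of a complete path from $a$ to $b$, it suffices to prove the following one-step statement. Given $x_0=(\rho_0,\pi_0,f_0)$ and the weight $w(x_0<x_1)=(n-m_1,n-m_2)$ of a complete path $x_0<x_1$ (a cover relation), the element $x_1$ is uniquely determined. Then, reading the weight word two letters at a time, $x_1$ is recovered from $x_0=a$ and the first pair, $x_2$ from $x_1$ and the second pair, and so on. Hence $p=(x_0<\dots<x_k)$ is determined by $w(p)$ together with its starting point $a$.

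For the one-step statement, we first observe that a cover relation in $\Pi_2(N)$ goes from the finer to the coarser element by fusing exactly two blocks. In the paper's convention $\rho_0$ is obtained from $\rho_1$ by fusing two parts, so we read the path in the direction where one element is obtained from the other by a single fusion of two blocks $q_1,q_2$ of the noncrossing partition. Simultaneously, the two corresponding blocks $f^{-1}(q_1),f^{-1}(q_2)$ of $\pi$ are fused, and the new bijection sends their union to $q_1\cup q_2$. The steps are as follows.

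\begin{enumerate}
\item In the partition with more blocks, the blocks of the noncrossing partition are determined by their minima, which are pairwise distinct. Hence the integers $m_1=n-w_1$ and $m_2=n-w_2$ identify $q_1$ and $q_2$ uniquely.
\item The fused noncrossing partition is then $(\rho\setminus\{q_1,q_2\})\cup\{q_1\cup q_2\}$.
\item The fused partition of $N$ is obtained by merging $f^{-1}(q_1)$ and $f^{-1}(q_2)$, and the new bijection is forced.
\end{enumerate}

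Thus the coarser element is determined by the finer one and the weight. If the path runs in the other direction, so that the known endpoint is the coarser one, the symmetric argument does not go through: a single block of the coarse element could be split in many ways, and the two minima alone do not fix the split of the elements of $N$. In that case we instead run the induction from the finer end of the path. That end is $a$ or $b$ according to the paper's orientation convention, and both endpoints are fixed data of the path. Concretely, we reconstruct from the extremal element that is the finer one, reading the weight word in the corresponding order.

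The main obstacle is this orientation bookkeeping. We must check that in the paper's convention for the order on $\Pi_2$ (where $\rho_0$ is obtained from $\rho_1$ by fusing), the weight letters are indexed by block minima of the partition with more blocks. We must also check that the reconstruction proceeds from the endpoint where fusions are deterministic. Once the one-step lemma is stated in the correct direction, the induction is immediate.
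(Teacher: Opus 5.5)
Your final argument, which starts from the finer endpoint, locates the two fused blocks of its noncrossing partition by their pairwise distinct minima, and observes that the fusion of the corresponding blocks of $\pi$ and the new bijection are then forced, is exactly the paper's proof, which reconstructs $p$ recursively from $b$. The orientation you left open is settled by the definition of the weight: in $x_0<x_1$ the coarser element is $x_0$, since $\rho_0$ is obtained from $\rho_1$ by fusing, so $a$ is the coarse end and your opening reduction from $a$ is false as stated (from $\rho_0=\{\{1,2,3\}\}$ with weight $(2,1)$, both $\rho_1=\{\{1,3\},\{2\}\}$ and $\rho_1=\{\{1\},\{2,3\}\}$ are possible); delete it and run the induction from $b=x_k$, undoing $x_{k-1}<x_k$ first, then $x_{k-2}<x_{k-1}$, and so on.
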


\begin{proof}
	The weights contain the information of which blocks are fused at each step, thus we can recursively reconstruct $p$ from $b$.
\end{proof}

\begin{Definition}
	Let $a<c\in \Pi_2(N)$ such that the complete paths from $a$ to $c$ have length $2$.
	Let $b\in ]a,c[$ such that $w((a<b<c))$ is maximal among the complete paths from $a$ to $c$.
	Let $\mathcal{R}$ the quadratic rewriting system of $A$ given by:
	$$e_{a\to b}.e_{b\to c}=-\sum_{\beta \in]a,c[\;\text{and}\; \beta\neq b} e_{a\to \beta}.e_{\beta\to c}$$
\end{Definition}

\begin{Proposition}
	We have that $\left(A/\mathcal{R}\right)^{n-1}=h^{n-1}(\Pi_2(N))$.
\end{Proposition}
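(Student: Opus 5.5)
We identify top-degree cochains with complete paths and top-degree coboundaries with the relations of $\mathcal{R}$. Since $h^\bullet(\Pi_2(N))$ is relative to the extremal elements, $c^{n-1}(\Pi_2(N))$ is spanned by chains $x_0<\dots<x_{n}$ from the minimum to the maximum of maximal length. These are exactly the complete paths from $\hat 0$ to $\hat 1$ in $\Pi_2(N)$: each step fuses two blocks, so nothing can be inserted. Hence $c^{n-1}(\Pi_2(N))=A^{n-1}_{\hat0\to\hat1}$. We read $(A/\mathcal{R})^{n-1}$ as this component, namely the paths of length $n-1$ between the extremal elements; every path of length $n-1$ in $\Pi_2(N)$ is of this form anyway, since the rank is $n-1$. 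As $c^{n}=0$, we get $h^{n-1}=c^{n-1}/d(c^{n-2})$. It therefore suffices to show that $d(c^{n-2})$ equals the subspace $I^{n-1}$ spanned by the relations of $\mathcal{R}$, multiplied on the left and right by complete paths so as to form paths of length $n-1$ from $\hat0$ to $\hat1$.

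\textbf{Step 1: $d(c^{n-2})\subseteq I^{n-1}$.} Take a chain $y_0<\dots<y_{n-1}$ of length $n-2$ from $\hat0$ to $\hat1$. By grading, exactly one step $y_{i-1}<y_i$ has rank difference $2$, and all other steps are covers. In the differential, insertion into a cover step gives nothing. The only surviving term is
$$(-1)^i\, p\cdot\Big(\sum_{\beta\in]y_{i-1},y_i[} e_{y_{i-1}\to\beta}.e_{\beta\to y_i}\Big)\cdot p',$$
where $p$ and $p'$ are the complete paths formed by the remaining steps. The parenthesised sum is precisely the relation of $\mathcal{R}$ attached to the pair $a=y_{i-1}$, $c=y_i$, moved to one side. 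So $d(c^{n-2})$ is spanned by the elements $p\cdot r_{a,c}\cdot p'$, where $r_{a,c}$ is this sum.

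\textbf{Step 2: $I^{n-1}\subseteq d(c^{n-2})$.} A relation $r_{a,c}$, sandwiched between complete paths $p$ from $\hat0$ to $a$ and $p'$ from $c$ to $\hat1$ of total length $n-1$, is, up to sign, $d$ of the chain obtained from $p\cdot p'$ by keeping $a<c$ as a single step. Hence the two subspaces coincide, and the quotients agree.

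The main point of care is the meaning of $A/\mathcal{R}$. The rewriting system presents the same quotient as the ideal generated by the elements $r_{a,c}$: the choice of the maximal-weight $b$ only orients each relation into a rule, and does not change the ideal. Its role comes later, when normal forms give a basis of the quotient. We must also check that every interval $[a,c]$ of length $2$ is nonempty, which makes the relations well defined, and that the signs $(-1)^i$ are harmless because we are comparing spans. No confluence of $\mathcal{R}$ is needed for this statement.
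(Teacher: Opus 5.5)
Your proof is correct and follows the same route as the paper, which only remarks that the relations $\sum_{b\in]a,c[}e_{a\to b}.e_{b\to c}=0$ give exactly the image of $d$ in top degree. You fill in the details: only the unique rank-$2$ step of a chain in $c^{n-2}$ contributes to $d$, and conversely each sandwiched relation is $\pm d$ of a chain. Two harmless slips: $\Pi_2(N)$ has $n!$ maximal elements rather than a single $\hat 1$, and a chain of length $n-2$ is $y_0<\dots<y_{n-2}$.
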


\begin{proof}
	The set of relations associated with $\mathcal{R}$ is:
	$$\sum_{b \in]a,c[} e_{a\to b}.e_{b\to c}=0$$
	which exactly gives the image of $d$ on $A^{n-1}$.
\end{proof}

We could show that this quadratic rewriting system is confluent by checking the minimal overlaps.
However, we won't need this in the sequel.

\subsection{Bijection with the planar rooted trees}

\begin{Definition}
	Let $\tau\in\mathrm{PT}(N)$. 
	We associate a chain $c(\tau)$ to $\tau$ by ordering the vertices of $\tau$ via the depth-right ordering (see Definition~\ref{def:dro}), ordering the edges of $\tau$ via the left-to-right ordering (see Definition~\ref{def:l2ro}), and setting each edge to be the fusion of two blocks.
	It is clear that this map is injective.
	The chains of the form $c(\tau)$ are the \emph{tree-like chains}.
\end{Definition}

\begin{Example}
	Let us give an example:
	\[\vcenter{\hbox{\begin{tikzpicture}[scale=0.4]
		\draw[thick] (0, 0.5)--(-1, 2);
		\draw[thick] (0, 0.5)--(1, 2);
		\draw[thick] (-1, 3.5)--(-1, 2);
		\draw[fill=white, thick] (0, 0.5) circle [radius=15pt];
		\draw[fill=white, thick] (-1, 2) circle [radius=15pt];
		\draw[fill=white, thick] (-1, 3.5) circle [radius=15pt];
		\draw[fill=white, thick] (1, 2) circle [radius=15pt];
		\node at (0, 0.5) {$\delta$};
		\node at (-1, 2) {$\gamma$};
		\node at (-1, 3.5) {$\alpha$};
		\node at (1, 2) {$\beta$};
	\end{tikzpicture}}}\qquad\mapsto\qquad
	\vcenter{\hbox{\begin{tikzpicture}[scale=0.4]
		\draw[thick] (0, 0)  (4, 0) arc(0:180:2) circle;
                \draw[thick] (4, 0)  (6, 0) arc(0:180:1) circle;
                \draw[thick] (0, 0)  (2, 0) arc(0:180:1) circle;
		\draw[fill=white, thick] (0, 0) circle [radius=15pt];
		\draw[fill=white, thick] (2, 0) circle [radius=15pt];
		\draw[fill=white, thick] (4, 0) circle [radius=15pt];
		\draw[fill=white, thick] (6, 0) circle [radius=15pt];
		\node at (0, 0) {$\delta$};
		\node at (2, 0) {$\beta$};
		\node at (4, 0) {$\gamma$};
		\node at (6, 0) {$\alpha$};
		\node at (2, -1) {$3$};
		\node at (4, -1) {$2$};
		\node at (6, -1) {$1$};
	\end{tikzpicture}}}
	\]
\end{Example}

\begin{Theorem}
	The normal forms of $A^{n-1}$ with respect to the rewriting system $\mathcal{R}$ are exactly the tree-like chains.
\end{Theorem}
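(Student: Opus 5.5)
A chain of maximal length is a normal form exactly when none of its length-two subpaths $x_{i-1}<x_i<x_{i+1}$ is the leading term of a rule of $\mathcal{R}$. So the proof reduces to a local analysis of length-two intervals $[a,c]$ of $\Pi_2(N)$, followed by a global count.

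First I classify the length-two intervals $]a,c[$. Going from $c$ down to $a$ performs two fusions, and there are two cases.

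\emph{Disjoint case.} The two fusions involve four distinct blocks of $c$. Then $]a,c[$ consists of the two orders of performing them, possibly together with extra elements in which the relative placement in the noncrossing partition differs.

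\emph{Overlapping case.} Three blocks $q_1,q_2,q_3$ of $c$ are merged into one block of $a$. The intermediate elements correspond to which pair is fused first, and to how the planar ordering is realised.

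For each interval I compute the weights and identify the maximal path $a<b<c$. Comparing weights lexicographically means comparing the first fusion performed, reading the chain upward from $x_0$. Since a weight is $(n-\min q_1, n-\min q_2)$, the maximal path is the one whose first fusion has the smallest minima. Translating to the arc pictures, the forbidden local patterns should be the following.
\begin{itemize}
\item Two consecutive fusions whose order disagrees with the left-to-right edge ordering of Definition~\ref{def:l2ro}. Here an arc ending further right must be fused earlier.
\item An arc configuration that is not the depth-right shape. An example is a node linked to a left neighbour that is not its parent in the depth-right reading, in a situation where a competing intermediate element with a larger weight exists.
\end{itemize}
The two examples of $\circ_*$ computed above serve as sanity checks. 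In both, the path with weight $(2,1,2,0)$ beats the path with weight $(2,0,2,1)$, so the second is rewritten. This matches the fact that only $c(\tau)$ for the chain tree should survive.

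Next I show that tree-like chains avoid all these leading patterns. For $c(\tau)$, the vertices are placed in depth-right order and the edges are fused in reversed order of their maximal endpoint. Each consecutive pair of fusions is then either disjoint and correctly ordered, or shares a parent in the correct order. A direct check against the classification shows that neither is the maximal-weight element of its interval.

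For the converse I argue by counting rather than by hand. The rewriting system decreases weight, and weights lie in a finite totally ordered set, so every element of $A^{n-1}$ reduces to a combination of normal forms. Hence normal forms span $(A/\mathcal{R})^{n-1}=h^{n-1}(\Pi_2(N))$. The proposition of \cite{DD25} gives $\dim h^{n-1}(\Pi_2(N))=|\mathrm{PT}(N)|$, and $c$ is injective.

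It remains to show that every normal form is tree-like. I prove this directly. In a maximal chain with no reducible consecutive pair, consider the fusion order. Non-reducibility forces each pair of consecutive fusions to be ordered by decreasing right endpoint. Globally, this forces the fusions to be ordered by decreasing right endpoint, which is exactly the left-to-right ordering. Non-reducibility should likewise force the arc diagram to read as a depth-right tree. Together these give the normal form $c(\tau)$.

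The main obstacle is the local classification of length-two intervals in $\Pi_2(N)$. This includes both the bijection data $f$ and the noncrossing relabelling, and determining exactly which element has maximal weight. If the direct converse argument becomes messy, I would instead establish linear independence of the tree-like chains in the quotient, using the dimension count to conclude. That approach, however, requires confluence, which the paper avoided, so the direct local argument is preferable.
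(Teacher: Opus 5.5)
Your overall logic is sound, and in one respect sharper than the paper's. The paper argues from ``tree-like chains are normal forms'' together with spanning and a dimension count. That only shows there are at least $|\mathrm{PT}(N)|$ normal forms, unless confluence is known. Your direction, that every normal form is tree-like, is the one that actually closes the count.

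The gap is that all the content sits in the local analysis you postpone. The one concrete criterion you state, that non-reducibility forces consecutive fusions to have decreasing right endpoints, does not follow from the weight as defined, and is in fact false.

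\emph{The setting is simpler than you fear.} Below a maximal element $x$ (an ordering of $N$), the data $\pi,f$ are determined by $\rho$. So $(\Pi_2)_{\leq x}$ is the lattice of noncrossing partitions of $\{1,\dots,n\}$, and there are no ``extra elements''. For two independent consecutive fusions $F,G$, the interval has exactly two interior points, with path weights $w(F)w(G)$ and $w(G)w(F)$. Since $n-\min q_1$ is compared first and the four minima are distinct, the \emph{left} endpoints decide which path is leading. For nested arcs this is the wrong way round.

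\emph{Counterexample, reading weights in fusion order.} Take $\tau$ with root $r$, children $a$ (left) and $b$ (right), and $c$ a child of $b$. The depth-right order is $r,b,c,a$, and $c(\tau)$ fuses $\{1\},\{4\}$, then $\{2\},\{3\}$, then $\{1,4\},\{2,3\}$. Read weights in fusion order, as in the paper's example yielding $(2,1,2,0)$. The first two steps give $(3,0,2,1)>(2,1,3,0)$, so this subpath is the leading term and $c(\tau)$ is reducible. The surviving path fuses the inner arc first.

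\emph{Counterexample, reading upward from $x_0$.} This is the reading you propose, and what $w(p_1.p_2)=w(p_1).w(p_2)$ literally says. It repairs the example above but breaks $n=3$. The corolla chain (fuse $\{1\},\{3\}$, then $\{2\}$) gets $(2,1,2,0)$, the maximum of its interval. Meanwhile the non-tree-like chain fusing $\{1\},\{2\}$ first survives.

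So with the weight exactly as written, no local check can prove the statement. The paper's ``clear since the edges are ordered right to left'' tacitly assumes a right-endpoint comparison. Your sanity check also misfires. The two chains you cite lie below different maximal elements ($\alpha\beta\gamma$ and $\alpha\gamma\beta$), so no rule compares them. Moreover, a rule rewrites the \emph{maximal} path, not the other one.

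Your plan goes through once the weight is read in fusion order from the top and $n-\min q_2$ is compared before $n-\min q_1$.
\begin{itemize}
\item For an independent pair, the leading path is then the one whose first fusion has the smaller right endpoint.
\item For three blocks with minima $m_1<m_2<m_3$, fusing the $m_1$- and $m_2$-blocks first never creates a crossing, since otherwise two of the three blocks would already cross. It is the unique path whose first right endpoint is $m_2$; all others have $m_3$.
\end{itemize}
Hence a chain is normal if and only if its right endpoints strictly decrease, i.e.\ the $k$-th fusion has right endpoint $n+1-k$.

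The ``depth-right shape'' that you leave as ``should'' is then automatic.
\begin{itemize}
\item Each $j>1$ has a unique left partner $p(j)$.
\item Arcs cannot cross. A later arc $i'$--$j'$ with $i'<i<j'<j$ would produce a block containing $i',j'$ while the block with minimum $i$ still contains $j$.
\item In a noncrossing diagram with unique left partners, every subtree is an interval of positions. Ordering children by decreasing position therefore recovers a planar tree $\tau$ with $c(\tau)$ equal to the chain.
\end{itemize}
This proves both inclusions directly, with no counting or confluence needed.
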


\begin{proof}
	The rewriting system $\mathcal{R}$ is a terminating rewriting system, since it decreases along the order induced by the weight.
	Thus, the normal forms of $A^{n-1}$ with respect to the rewriting system $\mathcal{R}$ are a generating family of $\left(A/\mathcal{R}\right)^{n-1}$.
	Thus, by equality of dimension, it suffices to show that tree-like chains are normal forms which is clear since the edges are ordered right to left.
\end{proof}

In particular, we have constructed an explicit isomorphism between $h^{n-1}(\Pi_2)$ and $\mathcal{PT}$, let us denote $t$ this isomorphism.

\section{Proof of the main theorem}

\subsection{Partial Compositions in leaves}

Since we have an explicit isomorphism $t:\mathcal{PT}\to h^{n-1}(\Pi_2)$, we can define the notion of leaves on tree-like chains.

\begin{Definition}
	Let $p$ a tree-like chain.
	An element $i\in N$ is a leaf in $p$ if $i$ is a leaf in $t(p)$.
	We may notice from the definition of $t$ that $i$ is a leaf in $p$ if and only if whenever two blocks $q_1$, $q_2$ fuse in $p$, $\sigma(i)$ is never the minimal of $q_1\sqcup q_2$.
\end{Definition}

\begin{Proposition}\label{prp:leaf}
	Let $p_1$ a tree-like chain of $\Pi_2(A\sqcup\{v\})$, and $p_2$ a tree-like chain of $\Pi_2(B)$, such that $v$ is a leaf of $p_1$.
	Then $p_1\circ_v p_2$ is a tree-like chain obtained by replacing $v$ by $p_2$ in $p_1$ (up to a Koszul sign).
\end{Proposition}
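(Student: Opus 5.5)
We compute $p_1\circ_v p_2$ directly from the definition of the partial composition at the level of cochains, as in the worked example above. The aim is to show that exactly one element $x\in a^{-1}(\pi)$ contributes a nonzero term, and that this term is already the tree-like chain $c(\tau_1\circ_v\tau_2)$, where $\tau_i=t(p_i)$. If so, no rewriting modulo $\mathcal{R}$ is needed, and it only remains to track a sign.

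\textbf{Step 1: the $\psi^*$ part.} The chain $p_2$ gives, for every $x\in a^{-1}(\pi)$, a chain $\psi_x^*(p_2)$ in $(\Pi_2)_{\geq x}$. Such an $x$ is determined by where the block $B$ sits, as a noncrossing block, among the singletons of $A$. By the third consequence listed after the example, the minimal element of $B$ in $p_2$ (the root of $\tau_2$, first in its depth-right order) takes the relative position that $v$ has in $p_1$. By the second consequence, $A$ and $B$ keep their internal orders.

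\textbf{Step 2: the $\varphi^*$ part.} We show that $\varphi_x^*(p_1)\neq0$ only when $B$ forms a contiguous interval placed at the position of $v$. The key input is that $v$ is a leaf: in $p_1$, $v$ is never the minimum of a fused pair. So the only fusion involving $v$ is the single edge from its parent, which links the minimum of the parent's block to $v$.

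Suppose some element $a\in A$ sits strictly between elements of $B$. Then $a$ lies to the right of $v$ and inside the block $B$ that replaces $v$. In the depth-right order, $a$ comes after $v$ and before the next element not above $v$. Since $v$ is a leaf, the fusion attaching $a$ to its parent links $a$ to something left of $v$. That arc crosses the block $B$, so the lifted chain in $(\Pi_2)_{\leq x}$ is not noncrossing. Hence such an $x$ is not in the image and the term vanishes, as $\varphi_z^*$ did in the example.

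When $B$ is contiguous at the position of $v$, the lift is unique, so exactly one term survives.

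\textbf{Step 3: identification.} Concatenating $\varphi_x^*(p_1)$ (the fusions of $p_2$ inside the block $B$, performed first) with the fusions of $p_1$ (with $v$ replaced by $\min B$) gives a maximal chain. We check that this chain is $c(\tau_1\circ_v\tau_2)$.

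For the vertex order: replacing the leaf $v$ by the subtree $\tau_2$ inserts the depth-right order of $\tau_2$ as a consecutive segment at the place of $v$. This matches the contiguous interval of Step 2.

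For the edge order: the edges of $\tau_2$ have maximal vertices inside that segment. The edges of $\tau_1$ keep their labels, except that the edge into $v$ now points to $\min B$. The left-to-right ordering of the composite therefore agrees with the order of fusions up to a permutation of blocks of edges, namely the edges of $p_2$ placed first versus their left-to-right position among those of $p_1$. That permutation contributes only the Koszul sign announced in the statement, consistent with the convention of Proposition~\ref{prp:ks}.

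Since the resulting chain is tree-like, it is a normal form, and the proposition follows.

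\textbf{Main obstacle.} The delicate point is Step 2, the vanishing of all non-contiguous placements. It requires carefully tracking how the noncrossing condition on the $\rho$-components interacts with the identification $\varphi_\rho$, which orders blocks by their minima. I would first verify it in the example with $A=\{\gamma\}$. Then I would argue by induction on the position of $v$ in the depth-right order, using that every element after $v$ in $p_1$ is attached strictly to the left of $v$'s segment.
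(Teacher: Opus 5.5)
Steps 1 and 2 follow the same route as the paper: only the placement of $B$ as a contiguous segment at the position of $v$ survives, by a crossing argument. Step 3, however, has a genuine gap. The surviving term is \emph{not} already the tree-like chain $c(\tau_1\circ_v\tau_2)$. Concatenation performs all fusions of $p_2$ first and then all fusions of $p_1$. In $c(\tau_1\circ_v\tau_2)$, by contrast, the edges of $p_1$ whose right endpoint lies beyond the $B$-segment must come before the edges of $p_2$.

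For instance, take the tree-like chain on $\alpha,*,\beta$ with $*$ and $\beta$ both fused to $\alpha$, and compose it at $*$ with the single fusion $\gamma\delta$. You get the chain on $\alpha,\gamma,\delta,\beta$ with fusion order $3,1,2$ under $\gamma,\delta,\beta$, whereas the tree-like chain has $3,2,1$. Chains with different fusion orders have different intermediate elements, so they are different basis elements of $c^{n-1}(\Pi_2(N))$. The ``edges of degree one'' Koszul rule belongs to $\Lambda\mathrm{Brace}$, not to the poset cochain complex. Hence your claims that no rewriting modulo $\mathcal{R}$ is needed and that the resulting chain is already a normal form are false, and the sign is left unjustified.

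The missing argument is the one the paper uses. In cohomology, $\sum_{b\in]a,c[}e_{a\to b}.e_{b\to c}=0$. When two consecutive fusions merge disjoint pairs of blocks, $]a,c[$ has exactly two elements, so exchanging them costs exactly $-1$. You must then check that every exchange you perform is of this kind. Each edge of $p_1$ with right endpoint beyond the $B$-segment precedes, in the right-to-left order of $p_1$, the edge attaching $v$ to its parent. So its blocks consist of elements of $A$ only, while every fusion of $p_2$ is internal to $B$. Without this check there is no reason the reordering yields a single signed term. Compare Lemma~\ref{lem:inter}, where exchanging non-independent fusions produces a sum of several trees.

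A smaller point in Step 2: it is not true that every $a\in A$ after $v$ is attached to something left of $v$. A child of a left sibling of $v$ is attached to that sibling, which comes after $v$ in the depth-right order. The same false claim reappears in your ``main obstacle'' paragraph. Argue instead with $v'$, the successor of $v$ in the depth-right order, as the paper does:
\begin{itemize}
\item Since $v$ is a leaf, the parent of $v'$ is a proper ancestor of $v$, hence lies left of $v$.
\item If some element of $A$ lies between elements of $B$, then so does $v'$, because relative orders are preserved and $\min B$ sits at the position of $v$.
\item The fusion of $v'$ with its parent happens while $B$ is still a separate block, which gives the crossing.
\end{itemize}
No induction is needed. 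Also, in Step 3 the fusions inside $B$ come from $\psi_x^*(p_2)$, not from $\varphi_x^*(p_1)$.
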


\begin{proof}
	Let us compute the partial composition $p_1\circ_v p_2$.
	The underlying partition of the composition is $\pi=\{\{\alpha\}\mid \alpha\in A\}\sqcup\{B\}$.
	Let us compute the elements of $a^{-1}(\pi)$ giving a non-zero term.
	Let us the following notation for $p_1$ and $p_2$:
	\[p_1=\vcenter{\hbox{\begin{tikzpicture}[scale=0.4]
		\draw[thick, dashed] (0, 0)  (4, 0) arc(0:180:2) circle;
                \draw[thick] (4, 0)  (14, 0) arc(0:180:5) circle;
                \draw[thick, dashed] (4, 0)  (8, 0) arc(0:180:2) circle;
                \draw[thick] (8, 0)  (12, 0) arc(0:180:2) circle;
		\draw[fill=white, thick] (0, 0) circle [radius=15pt];
		\draw[fill=white, thick] (4, 0) circle [radius=15pt];
		\draw[fill=white, thick] (8, 0) circle [radius=15pt];
		\draw[fill=white, thick] (12, 0) circle [radius=15pt];
		\draw[fill=white, thick] (14, 0) circle [radius=15pt];
		\node at (0, 0) {$r$};
		\node at (2, 0) {$\dots$};
		\node at (4, 0) {$s$};
		\node at (6, 0) {$\dots$};
		\node at (8, 0) {$s'$};
		\node at (10, 0) {$\dots$};
		\node at (12, 0) {$v$};
		\node at (14, 0) {$v'$};
		\node at (16, 0) {$\dots$};
	\end{tikzpicture}}}\qquad 
	p_2=\vcenter{\hbox{\begin{tikzpicture}[scale=0.4]
		\draw[thick, dashed] (0, 0)  (4, 0) arc(0:180:2) circle;
		\draw[fill=white, thick] (0, 0) circle [radius=15pt];
		\draw[fill=white, thick] (4, 0) circle [radius=15pt];
		\node at (0, 0) {$\alpha$};
		\node at (2, 0) {$\dots$};
		\node at (4, 0) {$\beta$};
	\end{tikzpicture}}} 
	\]
	where a dotted line means that we have a path of several lines from one end to the other.
	We did not write down the order of the fusions since it is right to left.
	Since $\alpha$ is the minimum, it will be at the position of $v$ in the composition.
	Moreover, both the vertices of $p_1$ and of $p_2$ will retain their relative order in the composition.
	Assume that $\beta$ is larger than $v'$ in one of the terms of the composition.
	We get:
	\[\vcenter{\hbox{\begin{tikzpicture}[scale=0.4]
		\draw[thick, dashed] (0, 0)  (4, 0) arc(0:180:2) circle;
                \draw[thick] (4, 0)  (16, 0) arc(0:180:6) circle;
                \draw[thick, dashed] (4, 0)  (8, 0) arc(0:180:2) circle;
                \draw[thick] (8, 0)  (12, 0) arc(0:180:2) circle;
                \draw[thick, dashed] (12, 0)  (20, 0) arc(0:180:4) circle;
		\draw[fill=white, thick] (0, 0) circle [radius=15pt];
		\draw[fill=white, thick] (4, 0) circle [radius=15pt];
		\draw[fill=white, thick] (8, 0) circle [radius=15pt];
		\draw[fill=white, thick] (12, 0) circle [radius=15pt];
		\draw[fill=white, thick] (16, 0) circle [radius=15pt];
		\draw[fill=white, thick] (20, 0) circle [radius=15pt];
		\node at (0, 0) {$r$};
		\node at (2, 0) {$\dots$};
		\node at (4, 0) {$s$};
		\node at (6, 0) {$\dots$};
		\node at (8, 0) {$s'$};
		\node at (10, 0) {$\dots$};
		\node at (12, 0) {$\alpha$};
		\node at (14, 0) {$\dots$};
		\node at (16, 0) {$v'$};
		\node at (18, 0) {$\dots$};
		\node at (20, 0) {$\beta$};
		\node at (22, 0) {$\dots$};
	\end{tikzpicture}}}\] 
	Thus, we get a crossing, which is not possible.
	Thus, the only non-zero term is:
	\[\vcenter{\hbox{\begin{tikzpicture}[scale=0.4]
		\draw[thick, dashed] (0, 0)  (4, 0) arc(0:180:2) circle;
                \draw[thick] (4, 0)  (18, 0) arc(0:180:7) circle;
                \draw[thick, dashed] (4, 0)  (8, 0) arc(0:180:2) circle;
                \draw[thick] (8, 0)  (12, 0) arc(0:180:2) circle;
                \draw[thick, dashed] (12, 0)  (16, 0) arc(0:180:2) circle;
		\draw[fill=white, thick] (0, 0) circle [radius=15pt];
		\draw[fill=white, thick] (4, 0) circle [radius=15pt];
		\draw[fill=white, thick] (8, 0) circle [radius=15pt];
		\draw[fill=white, thick] (12, 0) circle [radius=15pt];
		\draw[fill=white, thick] (16, 0) circle [radius=15pt];
		\draw[fill=white, thick] (18, 0) circle [radius=15pt];
		\node at (0, 0) {$r$};
		\node at (2, 0) {$\dots$};
		\node at (4, 0) {$s$};
		\node at (6, 0) {$\dots$};
		\node at (8, 0) {$s'$};
		\node at (10, 0) {$\dots$};
		\node at (12, 0) {$\alpha$};
		\node at (14, 0) {$\dots$};
		\node at (16, 0) {$\beta$};
		\node at (18, 0) {$v'$};
		\node at (20, 0) {$\dots$};
	\end{tikzpicture}}}\]
	This is not a tree-like chain since the order of the edges is not left-to-right. 
	Indeed, the order of the edge is first left-to-right for the former edges of $p_2$, and then left-to-right for the former edges of $p_1$.
	We may check that applying the rewriting rules $\mathcal{R}$ will exchange independent fusions and thus will simply create the expected Koszul sign.
	Thus, $p_1\circ_v p_2$ is a tree-like chain obtained by replacing $v$ by $p_2$ in $p_1$ up to the Koszul sign.
\end{proof}

\begin{Example}
	Let us carry this computation on a small example:
	\[
	\vcenter{\hbox{\begin{tikzpicture}[scale=0.4]
		\draw[draw=none, thick] (0, 0) -- (0, 3);
		\draw[thick] (0, 0)  (2, 0) arc(0:180:1) circle;
		\draw[thick] (0, 0)  (4, 0) arc(0:180:2) circle;
		\draw[fill=white, thick] (0, 0) circle [radius=15pt];
		\draw[fill=white, thick] (2, 0) circle [radius=15pt];
		\draw[fill=white, thick] (4, 0) circle [radius=15pt];
		\node at (0, 0) {$\alpha$};
		\node at (2, 0) {$*$};
		\node at (4, 0) {$\beta$};
		\node at (2, -1) {$2$};
		\node at (4, -1) {$1$};
	\end{tikzpicture}}}\circ_*
	\vcenter{\hbox{\begin{tikzpicture}[scale=0.4]
		\draw[draw=none, thick] (0, 0) -- (0, 3);
		\draw[thick] (0, 0)  (2, 0) arc(0:180:1) circle;
		\draw[fill=white, thick] (0, 0) circle [radius=15pt];
		\draw[fill=white, thick] (2, 0) circle [radius=15pt];
		\node at (0, 0) {$\gamma$};
		\node at (2, 0) {$\delta$};
		\node at (2, -1) {$1$};
	\end{tikzpicture}}}=
	\vcenter{\hbox{\begin{tikzpicture}[scale=0.4]
		\draw[thick] (0, 0)  (2, 0) arc(0:180:1) circle;
		\draw[thick] (2, 0)  (4, 0) arc(0:180:1) circle;
		\draw[thick] (0, 0)  (6, 0) arc(0:180:3) circle;
		\draw[fill=white, thick] (0, 0) circle [radius=15pt];
		\draw[fill=white, thick] (2, 0) circle [radius=15pt];
		\draw[fill=white, thick] (4, 0) circle [radius=15pt];
		\draw[fill=white, thick] (6, 0) circle [radius=15pt];
		\node at (0, 0) {$\alpha$};
		\node at (2, 0) {$\gamma$};
		\node at (4, 0) {$\delta$};
		\node at (6, 0) {$\beta$};
		\node at (2, -1) {$3$};
		\node at (4, -1) {$1$};
		\node at (6, -1) {$2$};
	\end{tikzpicture}}}\overset{\mathcal{R}}{\equiv} - 
	\vcenter{\hbox{\begin{tikzpicture}[scale=0.4]
		\draw[thick] (0, 0)  (2, 0) arc(0:180:1) circle;
		\draw[thick] (2, 0)  (4, 0) arc(0:180:1) circle;
		\draw[thick] (0, 0)  (6, 0) arc(0:180:3) circle;
		\draw[fill=white, thick] (0, 0) circle [radius=15pt];
		\draw[fill=white, thick] (2, 0) circle [radius=15pt];
		\draw[fill=white, thick] (4, 0) circle [radius=15pt];
		\draw[fill=white, thick] (6, 0) circle [radius=15pt];
		\node at (0, 0) {$\alpha$};
		\node at (2, 0) {$\gamma$};
		\node at (4, 0) {$\delta$};
		\node at (6, 0) {$\beta$};
		\node at (2, -1) {$3$};
		\node at (4, -1) {$2$};
		\node at (6, -1) {$1$};
	\end{tikzpicture}}} 
	\]
	We may notice that $-1$ is exactly the Koszul sign we are expecting when carrying this computation with trees.
\end{Example}

\subsection{Computation of the relations on the corollas}

Let us start with an intermediate result before doing the full computation.

\begin{Lemma}\label{lem:inter}
	We have:
	\begin{multline*}
	\vcenter{\hbox{\begin{tikzpicture}[scale=0.4]
		\draw[thick] (0, 0)  (4, 0) arc(0:180:2) circle;
		\draw[thick] (0, 0)  (6, 0) arc(0:180:3) circle;
		\draw[thick] (0, 0)  (10, 0) arc(0:180:5) circle;
		\draw[thick] (0, 0)  (13, 0) arc(0:180:6.5) circle;
		\draw[fill=white, thick] (0, 0) circle [radius=19pt];
		\draw[fill=white, thick] (4, 0) circle [radius=19pt];
		\draw[fill=white, thick] (6, 0) circle [radius=19pt];
		\draw[fill=white, thick] (10, 0) circle [radius=19pt];
		\draw[fill=white, thick] (13, 0) circle [radius=23pt];
		\node at (0, 0) {$x_0$};
		\node at (2, 0) {$\dots$};
		\node at (4, 0) {$x_k$};
		\node at (6, 0) {$y_j$};
		\node at (8, 0) {$\dots$};
		\node at (10, 0) {$y_1$};
		\node at (13, 0) {\scalebox{0.8}{$x_{k-1}$}};
		\node at (15, 0) {$\dots$};
		\node at (4, -1.2) {\scalebox{0.8}{$k$}};
		\node at (6, -1.2) {\scalebox{0.8}{$n+j$}};
		\node at (10, -1.2) {\scalebox{0.8}{$n+1$}};
		\node at (13, -1.2) {\scalebox{0.8}{$k-1$}};
	\end{tikzpicture}}} \;\overset{\mathcal{R}}{\equiv}\\
		(-1)^{j(n-k)} \sum_{i=0}^j
	\vcenter{\hbox{\begin{tikzpicture}[scale=0.4]
		\draw[thick] (0, 0)  (4, 0) arc(0:180:2) circle;
		\draw[thick] (4, 0)  (8, 0) arc(0:180:2) circle;
		\draw[thick] (4, 0)  (12, 0) arc(0:180:4) circle;
		\draw[thick] (0, 0)  (16, 0) arc(0:180:8) circle;
		\draw[thick] (0, 0)  (20, 0) arc(0:180:10) circle;
		\draw[thick] (0, 0)  (22, 0) arc(0:180:11) circle;
		\draw[fill=white, thick] (0, 0) circle [radius=19pt];
		\draw[fill=white, thick] (4, 0) circle [radius=19pt];
		\draw[fill=white, thick] (8, 0) circle [radius=19pt];
		\draw[fill=white, thick] (12, 0) circle [radius=23pt];
		\draw[fill=white, thick] (16, 0) circle [radius=19pt];
		\draw[fill=white, thick] (20, 0) circle [radius=19pt];
		\draw[fill=white, thick] (22, 0) circle [radius=23pt];
		\node at (0, 0) {$x_0$};
		\node at (2, 0) {$\dots$};
		\node at (4, 0) {$x_k$};
		\node at (8, 0) {$y_j$};
		\node at (10, 0) {$\dots$};
		\node at (12, 0) {\scalebox{0.8}{$y_{i+1}$}};
		\node at (16, 0) {$y_i$};
		\node at (18, 0) {$\dots$};
		\node at (20, 0) {$y_1$};
		\node at (22, 0) {\scalebox{0.8}{$x_{k-1}$}};
		\node at (24, 0) {$\dots$};
		\node at (4, -1.2) {\scalebox{0.8}{$k+j$}};
		\node at (8, -1.2) {\scalebox{0.8}{$k+j-1$}};
		\node at (12, -1.2) {\scalebox{0.8}{$k+i$}};
		\node at (16, -1.2) {\scalebox{0.8}{$k+i-1$}};
		\node at (20, -1.2) {\scalebox{0.8}{$k$}};
		\node at (22, -1.2) {\scalebox{0.8}{$k-1$}};
	\end{tikzpicture}}}
	\end{multline*}
	Where on the left-hand side all the $y$'s are connected to $x_0$, we have $n+1$ vertices labelled with $x$'s from $x_0$ to $x_n$, and the order is given by first ordering the $x$'s right to left, and then the $y$'s right to left.
	On the right-hand side, the ordering is right to left (thus we get tree-like chains), and $y_1$ up to $y_i$ are connected to $x_0$, while $y_{i+1}$ up to $y_j$ are connected to $x_k$.
\end{Lemma}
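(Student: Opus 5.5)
The plan is to prove the lemma by induction on $j$, the number of $y$'s, working entirely inside $A/\mathcal{R}$ and using only the quadratic relations $\sum_{b\in]a,c[}e_{a\to b}.e_{b\to c}=0$ on length-two intervals. First I would record the two basic local moves that these relations give on chains drawn as arcs.

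The first move is the \emph{independent swap}. If two consecutive fusions in the chain involve four distinct blocks, or more generally if the interval $[a,c]$ has exactly two interior elements, then the relation simply exchanges the order of the two fusions, at the cost of a sign $-1$.

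The second move is the \emph{three-block relation}. If two consecutive fusions merge three blocks $x_0, x_k, y$ (with $x_0$ left of $x_k$, and $y$ placed between $x_k$ and $x_{k-1}$), the noncrossing interval $[a,c]$ has three interior elements: fuse $x_0y$ first, fuse $x_0x_k$ first, or fuse $x_ky$ first. The relation then lets us rewrite "first $x_0$--$y$, then $x_0$--$x_k$" as minus "$x_0$--$x_k$ then $x_0$--$y$" minus "$x_k$--$y$ then $x_0$--$x_k$". I would check carefully, as in the example after Proposition~\ref{prp:leaf}, that the crossing configuration (fusing $y$ with a block it would cross) does not occur in the interval, so that exactly these three terms appear.

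With these moves, the argument goes as follows. On the left-hand side the fusions $y_1,\dots,y_j$ (all to $x_0$) happen after the $x$-fusions numbered up to $n$. I move the $j$ $y$-fusions down past the $x$-fusions $n,n-1,\dots,k+1$, which are independent of them since those blocks lie to the right of or are disjoint from $y$'s region. Each such passage is an independent swap, giving a total sign $(-1)^{j(n-k)}$. Now the $y$-fusions sit immediately after fusion $k-1$ and before the fusion $x_0$--$x_k$, hmm: more precisely, they must be pushed past the fusion $x_0$--$x_k$ (labelled $k$) using the three-block relation.

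I then do induction on $j$. I pass $y_j$ (the leftmost $y$, fused last) through the $x_0$--$x_k$ fusion. The three-block relation produces two terms: one where $y_j$ stays attached to $x_0$ but now fuses after $x_k$, and one where $y_j$ becomes attached to $x_k$. The two minus signs combine with the sign of the remaining swap to give $+$ after renormalising the order. In the $x_k$-attached term, all remaining $y_{j-1},\dots,y_1$ still attach to $x_0$, and the nested noncrossing structure forces the recursion to continue exactly as for $j-1$. Collecting terms, the surviving terms are indexed by the cut point $i$: $y_1,\dots,y_i$ attached to $x_0$ and $y_{i+1},\dots,y_j$ attached to $x_k$. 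Noncrossingness kills every other attachment pattern (a $y$ attached to $x_k$ to the right of one attached to $x_0$ would cross). Finally, I reorder the fusions right-to-left to reach the tree-like normal form, checking that this reordering consists only of independent swaps whose signs cancel against those already produced.

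The main obstacle is the sign bookkeeping in the inductive step. I must show that the signs of the three-block relation and of the reordering to right-to-left order combine to exactly $+1$ for every $i$, so that only the global $(-1)^{j(n-k)}$ remains. I would first verify $j=1$ and $j=2$ by hand to fix conventions, then set up the induction so that each term is carried with its fusion order made explicit.
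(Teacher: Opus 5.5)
Your overall plan (two-term swaps of the $y$-fusions past those of $x_n,\dots,x_{k+1}$, a three-term relation at the fusion of $x_k$, induction on $j$, noncrossingness selecting the shapes indexed by $i$) is the paper's. However, there is a genuine gap in the inductive step as you describe it.

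\textbf{The order of moves.} After you push all $y$-fusions past $x_{k+1},\dots,x_n$, the fusion sequence reads $\dots,x_k,y_1,y_2,\dots,y_j,x_{k+1},\dots$. So the fusion adjacent to that of $x_k$ is $y_1$'s, not $y_j$'s. You cannot apply a quadratic relation to $x_k$ and $y_j$. Commuting $y_j$ back past $y_{j-1},\dots,y_1$ is not a swap either: two adjacent $y$'s can be fused directly without crossing, so each step is a three-term relation producing $y$'s hanging from $y$'s.

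\textbf{The branches.} Your description of the two branches is reversed. Once some $y$ has been fused with $x_k$, every $y$ processed later is \emph{forced} into the block of $x_k$, because fusing it with the block of $x_0$ while $x_k$'s block is still separate would create a crossing. That branch gives a single term, and it is the exchanged branch that recurses.

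\textbf{The swaps.} Your reason for the swaps is also off. The $x_\ell$ with $\ell>k$ sit between $x_0$ and $x_k$ and share the block of $x_0$, so they are not ``to the right of or disjoint''. The swap is two-term because fusing $x_\ell$ with the $y$ would cross the arc $x_0$--$x_k$.

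\textbf{The paper's route.} The paper processes $y_1$ first. At $x_k$, the branch where $x_k$ and $y_1$ are fused gives the term $i=0$. In the exchanged branch, $y_1$ becomes a new $x$ between $x_k$ and $x_{k-1}$. This is the same configuration with $(n,k,j)$ replaced by $(n+1,k+1,j-1)$, so $n-k$ is unchanged and induction gives the terms $i\geq 1$. Equivalently, you could apply the induction hypothesis to the prefix containing $y_1,\dots,y_{j-1}$ and only then move $y_j$.

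\textbf{The sign.} The sign step you single out as the main obstacle cannot succeed, because there is nothing to cancel. Each $y$ undergoes $n-k$ swaps and then exactly one relation at the fusion of $x_k$ (three-term or forced two-term). Each of these contributes $-1$, and the output is already in right-to-left order. So the coefficient is $(-1)^{j(n-k+1)}$, not $(-1)^{j(n-k)}$.

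A concrete check: for $n=k=j=1$, the left-hand side and the two right-hand chains $T_0,T_1$ are the three maximal chains of a rank-two interval with three interior elements. Hence the left-hand side equals $-(T_0+T_1)$, whereas $(-1)^{j(n-k)}=+1$. Your planned hand check of $j=1$ would expose this.

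This $-1$ is exactly what Proposition~\ref{prp:ks} requires ($\varepsilon=j_1=1$). Moreover, summing $(j_\ell-j_{\ell-1})(n-\ell+1)$ over the groups in Proposition~\ref{prp:cor} reproduces precisely the first expression for $\varepsilon$ given there.

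So the exponent $j(n-k)$ in the statement, and the final count in the paper's own proof, are off by a factor $(-1)^j$. The paper's described procedure itself applies $n-k+1$ rewritings per $y$. Aim for $(-1)^{j(n-k+1)}$ instead.
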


\begin{proof}
	To do the computation, let us iteratively apply the rewriting rule of $\mathcal{R}$ starting with $y_1$ up to $y_i$.
	We first apply $\mathcal{R}$ between the fusions of $x_n$, and the one of $y_1$.
	Because of the fusion of $x_k$ with $x_0$, fusing the block of $x_n$ with $y_1$ would create a crossing.
	Thus, the fusion of $x_n$, and the fusion of $y_1$ are simply exchanged (with a Koszul sign appearing).
	The same is true up to $x_k$, where two terms appear, a first one where we fuse $x_k$ with $y_1$, and a second one where the fusions are simply exchanged.
	\begin{itemize}
		\item In the first case, the fusion between $y_1$ and $x_k$ forces each of the next rewriting rules to fuse the $y$'s with the block of $x_k$. 
			Thus, we get the term $i=j$.
		\item The second case reduces to the full case by relabelling the $x$'s such that $y_1$ is now $x_k$, and relabelling the $y$'s to get $j-1$ of them.
			A simple induction concludes the second case.
	\end{itemize}
	To compute the Koszul sign, we just need to count the number of rewriting rules that we have applied, we may check that this is indeed $j(n-k)$.
\end{proof}

\begin{Definition}
	Let $p$ a tree-like chain, this is a corolla if $t(p)$ is.
	We may notice from the definition of $t$ that $p$ is a corolla if and only if $p$ is constructed by fusing the largest singleton to the block containing the minimal element.
	The minimal element of $p$ is the root.
\end{Definition}

\begin{Example}
	We have: 
	\[\vcenter{\hbox{\begin{tikzpicture}[scale=0.6]
		\draw[thick] (0, 0.5)--(-1.5, 2);
		\draw[thick] (0, 0.5)--(1.5, 2);
		\draw[fill=white, thick] (0, 0.5) circle [radius=15pt];
		\draw[fill=white, thick] (-1.5, 2) circle [radius=15pt];
		\draw[fill=white, thick] (1.5, 2) circle [radius=15pt];
		\node at (0, 0.5) {$x_0$};
		\node at (-1.5, 2) {$x_1$};
		\node at (1.5, 2) {$x_n$};
		\node at (0, 2) {$...$};
	\end{tikzpicture}}}\;\rightsquigarrow\;
	\vcenter{\hbox{\begin{tikzpicture}[scale=0.4]
		\draw[thick] (0, 0)  (2, 0) arc(0:180:1) circle;
		\draw[thick] (0, 0)  (4, 0) arc(0:180:2) circle;
		\draw[thick] (0, 0)  (8, 0) arc(0:180:4) circle;
		\draw[fill=white, thick] (0, 0) circle [radius=19pt];
		\draw[fill=white, thick] (2, 0) circle [radius=19pt];
		\draw[fill=white, thick] (4, 0) circle [radius=23pt];
		\draw[fill=white, thick] (8, 0) circle [radius=19pt];
		\node at (0, 0) {$x_0$};
		\node at (2, 0) {$x_n$};
		\node at (4, 0) {\scalebox{0.8}{$x_{n-1}$}};
		\node at (6, 0) {$\dots$};
		\node at (8, 0) {$x_1$};
	\end{tikzpicture}}} 
	\]
\end{Example}

To conclude, it only remains to compute the composition of two corollas in their root.

\begin{Proposition}\label{prp:cor}
	Let $p_1$, $p_2$ two corollas with $r$ the root of $p_1$.
	The composition $p_1\circ_r p_2$ gives the same formula as Proposition~\ref{prp:ks}.
\end{Proposition}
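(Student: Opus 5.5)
We compute $p_1\circ_r p_2$ directly from the definition of the partial composition. Then we normalise the resulting chains with Lemma~\ref{lem:inter}.

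Write $p_1$ as the corolla with root $r=*$ and leaves $y_1,\dots,y_m$. In depth-right order the vertices are $*,y_m,\dots,y_1$, and the fusions join each $y_\ell$ to $*$ from right to left. Write $p_2$ as the corolla with root $x_0$ and leaves $x_1,\dots,x_n$, arranged as $x_0,x_n,\dots,x_1$.

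\textbf{Step 1: the nonzero terms.} By the three observations following the example of computation, each $\lambda\in a^{-1}(\pi)$ contributes at most one term. In that term:
\begin{itemize}
\item $x_0$ takes the position of $*$, so it is leftmost;
\item the $x$'s keep their relative order, and so do the $y$'s;
\item the first fusions are those of $p_2$, namely the $x$'s joined to $x_0$ right to left, followed by the fusions of $p_1$, namely the $y$'s joined to the block of $x_0$ right to left.
\end{itemize}
Non-crossing and the structure of $a^{-1}(\pi)$ force the $y$'s to be distributed into the $n+1$ gaps of the sequence $x_0,x_n,\dots,x_1$. Since $\varphi_x^*$ is injective onto such interleavings, each weakly increasing distribution occurs exactly once. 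These interleavings are indexed by the choices $0\le j_1\le\dots\le j_n\le m$ recording how many $y$'s lie to the right of each $x_\ell$. So the raw output is a sum of chains in which all $y$'s are fused to $x_0$, with the fusion order given by first the $x$'s right to left and then the $y$'s right to left. This is exactly the shape on the left-hand side of Lemma~\ref{lem:inter}, iterated over every group of $y$'s sitting between consecutive $x$'s.

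\textbf{Step 2: normalisation by Lemma~\ref{lem:inter}.} We rewrite these chains into tree-like chains by applying Lemma~\ref{lem:inter} once per gap. We proceed from the rightmost group of $y$'s (those after $x_1$, which are already attached to $x_0$ correctly and need no rewriting) leftwards toward $x_n$.

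For the block of $y$'s immediately following $x_k$, Lemma~\ref{lem:inter} does two things:
\begin{itemize}
\item it produces the sum over $i$ that splits this block into the $y$'s that remain children of $x_0$ and the $y$'s that become children of $x_k$;
\item it attaches the sign $(-1)^{(\text{size of block})\cdot(n-k)}$.
\end{itemize}
After relabelling, the outcome is again a chain of the form required by the lemma for the next gap, so the induction goes through. Reading off the resulting trees gives exactly the trees of Proposition~\ref{prp:ks}: each $x_k$ receives a consecutive interval $y_{i_k+1},\dots,y_{j_k}$, and the remaining $y$'s stay children of $x_0$ in planar order. Every tuple $i_1\le j_1\le\dots\le i_n\le j_n$ appears exactly once, because the pair (gap data, lemma index $i$) determines it bijectively.

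\textbf{Step 3: the sign.} Summing the exponents gives $\sum_k (\text{number of }y\text{'s in gap }k)\cdot(n-k)$ modulo $2$. Rewriting this in terms of the cumulative counts $j_\ell$ yields the telescoped expression $\sum_{\ell}\ell\,(j_{\ell}-j_{\ell-1})$ of Proposition~\ref{prp:ks}, up to reversing the indexing, and this is $\equiv\sum_\ell j_\ell$. There is one further contribution: the global Koszul sign from ordering the edges of $p_1$ before those of $p_2$, as in the convention of Proposition~\ref{prp:ks}. This must be checked against the convention in Step 1, where the fusions of $p_2$ come first.

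\textbf{Main obstacle.} I expect the hardest part to be this sign bookkeeping, specifically two points:
\begin{itemize}
\item matching the orientation, since the $x$'s are indexed right to left in the chain but left to right in the tree;
\item confirming that applying Lemma~\ref{lem:inter} successively to different gaps introduces no additional transposition signs.
\end{itemize}
I would first verify both points on the cases $n=1$ and $n=2$ with small $m$, and then carry out the general telescoping.
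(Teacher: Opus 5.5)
Your outline follows the paper's proof. The nonzero terms are the interleavings $x_0,y_m,\dots,y_{j_n+1},x_n,y_{j_n},\dots,x_1,y_{j_1},\dots,y_1$, with the fusions of $p_2$ first. One then applies Lemma~\ref{lem:inter} gap by gap from right to left. However, two steps are wrong as written, and the sign computation, which the paper leaves implicit, does not close.

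In Step 2 you put the untouched gap on the wrong side. The $y$'s that need no rewriting are $y_m,\dots,y_{j_n+1}$, between $x_0$ and $x_n$: they are already the rightmost children of $x_0$ and already carry the largest fusion labels. The gap after $x_1$ is exactly where the lemma with $k=1$ is needed. It is what produces the children $y_{i_1+1},\dots,y_{j_1}$ of $x_1$ and the sum over $i_1$. Read literally, your sentence loses every term in which $x_1$ has children, and it contradicts your next paragraph.

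Step 3 asserts a false identity. With $j_0=0$ one has $\sum_{k=1}^n (j_k-j_{k-1})(n-k)\equiv j_1+\dots+j_{n-1}$, whereas $\varepsilon\equiv j_1+\dots+j_n$. The discrepancy $(-1)^{j_n}$ varies from term to term, so no global Koszul sign can absorb it.

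The fix is to count the rewritings directly rather than rely on the printed exponent. Each $y$ in the gap after $x_k$ must be moved past the fusions of $x_n,\dots,x_k$. That takes $n-k+1$ applications of $\mathcal{R}$, each contributing $-1$:
\begin{itemize}
\item Past $x_n,\dots,x_{k+1}$ these are pure exchanges. Since $x_k$ already lies in the block of $x_0$, fusing some $x_\ell$ with $y$ would create a crossing.
\item At $x_k$ the $y$ either passes or joins $x_k$. Once one $y$ has joined, the later ones are forced to join as well.
\end{itemize}
So gap $k$ contributes $(j_k-j_{k-1})(n-k+1)$. The total $nj_1+(n-1)(j_2-j_1)+\dots+(j_n-j_{n-1})$ is literally the $\varepsilon$ of Proposition~\ref{prp:ks}.

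The exponent $j(n-k)$ printed in Lemma~\ref{lem:inter} is off by one relative to the count in its own proof. For $n=k=j=1$, the chain $x_0,x_1,y_1$ with $x_1$ fused first rewrites in a single step to minus the sum of the two tree-like chains, whereas $j(n-k)=0$ predicts $+1$.

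The extra global sign you anticipate does not occur. In the brace notation $x_0\{x_1,\dots,x_n\}\{y_1,\dots,y_m\}$ the left term is the corolla $p_2$. The derivation of $\varepsilon$ in Proposition~\ref{prp:ks} moves $y$-edges in front of $x$-edges, so it starts from the order with $x$-edges first. That is exactly the fusion order of the raw composite.

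Concretely, for $n=m=1$ the terms $(i_1,j_1)=(0,0),(0,1),(1,1)$ come out with signs $+,-,-$. This is $(-1)^{j_1}$ with no global factor. Your planned $n=1$ check would have exposed both sign problems.
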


\begin{proof}
	Let:
	\[p_1=
	\vcenter{\hbox{\begin{tikzpicture}[scale=0.4]
		\draw[thick] (0, 0)  (2, 0) arc(0:180:1) circle;
		\draw[thick] (0, 0)  (4, 0) arc(0:180:2) circle;
		\draw[thick] (0, 0)  (8, 0) arc(0:180:4) circle;
		\draw[fill=white, thick] (0, 0) circle [radius=19pt];
		\draw[fill=white, thick] (2, 0) circle [radius=19pt];
		\draw[fill=white, thick] (4, 0) circle [radius=23pt];
		\draw[fill=white, thick] (8, 0) circle [radius=19pt];
		\node at (0, 0) {$r$};
		\node at (2, 0) {$y_m$};
		\node at (4, 0) {\scalebox{0.7}{$y_{m-1}$}};
		\node at (6, 0) {$\dots$};
		\node at (8, 0) {$y_1$};
	\end{tikzpicture}}} \qquad,\qquad p_2=
	\vcenter{\hbox{\begin{tikzpicture}[scale=0.4]
		\draw[thick] (0, 0)  (2, 0) arc(0:180:1) circle;
		\draw[thick] (0, 0)  (4, 0) arc(0:180:2) circle;
		\draw[thick] (0, 0)  (8, 0) arc(0:180:4) circle;
		\draw[fill=white, thick] (0, 0) circle [radius=19pt];
		\draw[fill=white, thick] (2, 0) circle [radius=19pt];
		\draw[fill=white, thick] (4, 0) circle [radius=25pt];
		\draw[fill=white, thick] (8, 0) circle [radius=19pt];
		\node at (0, 0) {$x_0$};
		\node at (2, 0) {$x_n$};
		\node at (4, 0) {\scalebox{0.8}{$x_{n-1}$}};
		\node at (6, 0) {$\dots$};
		\node at (8, 0) {$x_1$};
	\end{tikzpicture}}} 
	\]
	Let $\lambda\in a^{-1}(\pi)$ such that the corresponding term is non-zero. We know that in $\lambda$, the vertices of $p_1$ are in the same relative order as in $p_1$, the vertices of $p_2$ are in the same relative order as in $p_2$, and $x_0$ is the minimum.
	The term corresponding to such a $\lambda$ is:
	\[\vcenter{\hbox{\begin{tikzpicture}[scale=0.4]
		\draw[thick] (0, 0)  (2, 0) arc(0:180:1) circle;
		\draw[thick] (0, 0)  (6, 0) arc(0:180:3) circle;
		\draw[thick] (0, 0)  (9, 0) arc(0:180:4.5) circle;
		\draw[thick] (0, 0)  (11, 0) arc(0:180:5.5) circle;
		\draw[thick] (0, 0)  (15, 0) arc(0:180:7.5) circle;
		\draw[thick] (0, 0)  (18, 0) arc(0:180:9) circle;
		\draw[thick] (0, 0)  (20, 0) arc(0:180:10) circle;
		\draw[thick] (0, 0)  (24, 0) arc(0:180:12) circle;
		\draw[fill=white, thick] (0, 0) circle [radius=19pt];
		\draw[fill=white, thick] (2, 0) circle [radius=19pt];
		\draw[fill=white, thick] (6, 0) circle [radius=23pt];
		\draw[fill=white, thick] (9, 0) circle [radius=19pt];
		\draw[fill=white, thick] (11, 0) circle [radius=19pt];
		\draw[fill=white, thick] (15, 0) circle [radius=23pt];
		\draw[fill=white, thick] (18, 0) circle [radius=19pt];
		\draw[fill=white, thick] (20, 0) circle [radius=19pt];
		\draw[fill=white, thick] (24, 0) circle [radius=19pt];
		\node at (0, 0) {$x_0$};
		\node at (2, 0) {\scalebox{0.9}{$y_m$}};
		\node at (4, 0) {$\dots$};
		\node at (6, 0) {\scalebox{0.7}{$y_{j_n+1}$}};
		\node at (9, 0) {$x_n$};
		\node at (11, 0) {\scalebox{0.9}{$y_{j_n}$}};
		\node at (13, 0) {$\dots$};
		\node at (15, 0) {\scalebox{0.7}{$y_{j_1+1}$}};
		\node at (18, 0) {$x_1$};
		\node at (20, 0) {\scalebox{0.9}{$y_{j_1}$}};
		\node at (22, 0) {$\dots$};
		\node at (24, 0) {$y_1$};
		\node at (2, -1.2) {\scalebox{0.8}{$n+m$}};
		\node at (6, -1.2) {\scalebox{0.8}{$n+j_n+1$}};
		\node at (9, -1.2) {\scalebox{0.8}{$n$}};
		\node at (11, -1.2) {\scalebox{0.8}{$n+j_n$}};
		\node at (15, -1.2) {\scalebox{0.8}{$n+j_1+1$}};
		\node at (18, -1.2) {\scalebox{0.8}{$1$}};
		\node at (20, -1.2) {\scalebox{0.8}{$n+j_1$}};
		\node at (24, -1.2) {\scalebox{0.8}{$n+1$}};
	\end{tikzpicture}}}
	\]
	Iteratively applying Lemma~\ref{lem:inter} to each group $y_{j_\ell +1}$ to $y_{j_{\ell+1}}$ from right to left concludes the proof.
\end{proof}

\begin{Theorem}
	The operadic structure on $\Lambda h^\bullet(\Pi_2)$ is isomorphic to the Brace operad.
\end{Theorem}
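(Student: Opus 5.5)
The plan is to use the uniqueness characterisation of Theorem~\ref{thm:carac}, transported to the suspended setting. Through the explicit isomorphism $t$ between tree-like chains and $\mathcal{PT}$, we place on $\mathcal{PT}$, with each edge in degree $1$, the operad structure induced from $\Lambda h^\bullet(\Pi_2)$. The structure on $\Lambda h^\bullet(\Pi_2)$ is a genuine graded operad by \cite[Theorem 2.8]{DD25}, so this transported structure is a graded operad. It then suffices to check the suspended versions of the two conditions of Theorem~\ref{thm:carac}. These are: (1) the corollas span a suboperad whose relations are exactly those of $\Lambda\mathrm{Brace}$ as computed in Proposition~\ref{prp:ks}, and that suboperad generates everything; (2) composition in a leaf is grafting, with the Koszul sign dictated by the left-to-right edge ordering. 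Once both hold, the uniqueness argument in the proof of Theorem~\ref{thm:carac} goes through verbatim and identifies the structure with $\Lambda\mathrm{Brace}$. The identification is up to the operadic suspension $\Lambda$, which matches the shift defining $\Lambda h^\bullet(\Pi_2)$.

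For condition (2) we invoke Proposition~\ref{prp:leaf}. It says that $p_1\circ_v p_2$ for $v$ a leaf is the tree-like chain obtained by grafting, up to the Koszul sign. That sign comes from reordering the edges of $p_2$ and $p_1$ into left-to-right order, which is exactly the sign convention of Proposition~\ref{prp:ks}. For condition (1) we invoke Proposition~\ref{prp:cor}: composing two corollas at the root of the outer one reproduces the right-hand side of the $\Lambda\mathrm{Brace}$ relation, with the sign $\varepsilon\equiv\sum_\ell j_\ell$. Compositions of corollas at a non-root input are leaf compositions, since every non-root vertex of a corolla is a leaf, so they are covered by (2). Consequently the corollas satisfy the relations $\mathcal{R}$, and we obtain a morphism $\Lambda\mathrm{Brace}\to\Lambda h^\bullet(\Pi_2)$ extending the inclusion of corollas.

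Next we show this morphism is an isomorphism. Every rooted planar tree is an iterated grafting of corollas into leaves, so by (2) every tree-like chain lies in the image, and the morphism is surjective. By Chapoton's result, $\mathrm{Brace}(n)$ has a basis indexed by $\mathrm{PT}(n)$. By \cite[Proposition 5.8]{DD25}, $\Lambda h^\bullet(\Pi_2(n))$ also has dimension $|\mathrm{PT}(n)|$. Surjectivity plus equal dimension gives bijectivity. Equivariance holds because all the constructions (the rewriting system $\mathcal{R}$, the maps $\varphi_x$ and $\psi_x$) are natural in relabellings of $N$.

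The main obstacle is the sign bookkeeping. We must check that the signs produced by Proposition~\ref{prp:leaf} and Lemma~\ref{lem:inter} agree with the Koszul signs of the operadic suspension of $\mathrm{Brace}$ under one fixed edge-ordering convention, and not merely up to an overall sign on each relation. I would first verify this on the small examples already computed in the paper, then argue it in general. The general argument is that both the rewriting signs and the suspension signs equal the signature of the same permutation of edges: from the order "$p_1$-edges then $p_2$-edges" to left-to-right order. Each application of $\mathcal{R}$ that swaps two independent fusions contributes exactly one transposition, so the two signs coincide.
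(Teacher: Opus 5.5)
Your proposal is essentially the paper's proof, which feeds Propositions~\ref{prp:leaf} and~\ref{prp:cor} into the characterisation of Theorem~\ref{thm:carac}. Your explicit morphism out of the suspended Brace operad, the surjectivity via iterated grafting of corollas into leaves, and the dimension count spell out the verification of property~1 of that theorem, which the paper leaves implicit and which by itself already gives the isomorphism; one small correction to your sign paragraph is that in the composite chain the fusions of $p_2$ come first and those of $p_1$ second (as stated in the proof of Proposition~\ref{prp:leaf}), and it is this order, not ``$p_1$-edges then $p_2$-edges'', whose reordering to left-to-right produces $\varepsilon\equiv\sum_\ell j_\ell$ in Proposition~\ref{prp:ks} --- though a uniform discrepancy of $(-1)^{(a-1)(b-1)}$ for arities $a,b$ would be harmless anyway, since rescaling arity $n$ by $(-1)^{\binom{n-1}{2}}$ absorbs it.
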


\begin{proof}
	It suffices to apply Theorem~\ref{thm:carac} using the Propositions~\ref{prp:leaf} and~\ref{prp:cor}.
\end{proof}

An interesting consequence of this theorem is that $\Lambda\check{h}^\bullet(\Pi_2)$ is naturally endowed with brace products by the construction of \cite{DD25}.
Indeed, in \cite{DD25}, $\Lambda\check{h}^\bullet(\Pi_2)$ is endowed with a structure of left $\Lambda h^\bullet(\Pi_2)$-module.
We recall that a structure of left module over an operad $\mathcal{P}$ is exactly the same as a structure of $\mathcal{P}$-algebra, thus we have a $\mathrm{Brace}$-algebra structure on $\Lambda\check{h}^\bullet(\Pi_2)$.

\section*{Acknowledgement}

{\small The author is funded by a postdoctoral fellowship of the ERC Starting Grant “Low Regularity Dynamics via Decorated Trees” (LoRDeT) of Yvain Bruned (grant agreement No. 101075208).\\
The author would like to thank Christophe Hohlweg, Loïc Foissy, Claudia Malvenuto, and Ruggero Bandiera for organising the conference Algebraic Combinatorics and Finite Groups IV where most of this work has been done.
The author would also like to thank Bérénice Delcroix-Oger for the related discussions at the conference Algebraic Combinatorics and Finite Groups IV. 
}

\bibliographystyle{plain}
\bibliography{Bibly}

\end{document}